\documentclass[pdflatex, sn-basic]{sn-jnl}% Basic Springer Nature Reference Style/Chemistry Reference Style
%\documentclass[sn-mathphys]{sn-jnl}% Math and Physical Sciences Reference Style
%\documentclass[sn-aps]{sn-jnl}% American Physical Society (APS) Reference Style
%\documentclass[sn-vancouver]{sn-jnl}% Vancouver Reference Style
%%\documentclass[sn-apa]{sn-jnl}% APA Reference Style 
%\documentclass[sn-chicago]{sn-jnl}% Chicago-based Humanities Reference Style
%\documentclass[default]{sn-jnl}% Default
%%\documentclass[default,iicol]{sn-jnl}% Default with double column layout

%%%% Standard Packages
%%<additional latex packages if required can be included here>

\usepackage{graphicx}%
\usepackage{multirow}%
\usepackage{amsmath,amssymb,amsfonts}%
\usepackage{amsthm}%
\usepackage{mathrsfs}%
\usepackage[title]{appendix}%
\usepackage{xcolor}%
\usepackage{textcomp}%
\usepackage{manyfoot}%
\usepackage{booktabs}%
\usepackage{algorithm}%
\usepackage{algorithmicx}%
\usepackage{algpseudocode}%
\usepackage{listings}%
%%%%
\usepackage{tikz}
%%%%%=============================================================================%%%%
%%%%  Remarks: This template is provided to aid authors with the preparation
%%%%  of original research articles intended for submission to journals published 
%%%%  by Springer Nature. The guidance has been prepared in partnership with 
%%%%  production teams to conform to Springer Nature technical requirements. 
%%%%  Editorial and presentation requirements differ among journal portfolios and 
%%%%  research disciplines. You may find sections in this template are irrelevant 
%%%%  to your work and are empowered to omit any such section if allowed by the 
%%%%  journal you intend to submit to. The submission guidelines and policies 
%%%%  of the journal take precedence. A detailed User Manual is available in the 
%%%%  template package for technical guidance.
%%%%%=============================================================================%%%%

%\jyear{2021}%

\newcommand{\argmin}{\mathop{\rm argmin}\limits}
\newcommand{\prox}{{\rm prox}}

\newcommand{\R}{{\mathbb R}}

\newcommand{\calA}{{\mathcal A}}
\newcommand{\calB}{{\mathcal B}}
\newcommand{\calC}{{\mathcal C}}

%% as per the requirement new theorem styles can be included as shown below
\newtheorem{theorem}{Theorem}
\newtheorem{proposition}{Proposition}

\newtheorem{lemma}{Lemma}
\newtheorem{definition}{Definition}

%\theoremstyle{thmstyleone}%
%%\newtheorem{theorem}{Theorem}%  meant for continuous numbers
%%\newtheorem{theorem}{Theorem}[section]% meant for sectionwise numbers
%% optional argument [theorem] produces theorem numbering sequence instead of independent numbers for Proposition
%%\newtheorem{proposition}[theorem]{Proposition}% 
%%\newtheorem{proposition}{Proposition}% to get separate numbers for theorem and proposition etc.

%\theoremstyle{thmstyletwo}%
%\newtheorem{example}{Example}%
%\newtheorem{remark}{Remark}%

%\theoremstyle{thmstylethree}%
%\newtheorem{definition}{Definition}%

\raggedbottom
%%\unnumbered% uncomment this for unnumbered level heads

\begin{document}

\title[Newton-type Methods with the Proximal Gradient Step for Sparse Estimation]{Newton-type Methods with the Proximal Gradient Step for Sparse Estimation}

%%=============================================================%%
%% Prefix	-> \pfx{Dr}
%% GivenName	-> \fnm{Joergen W.}
%% Particle	-> \spfx{van der} -> surname prefix
%% FamilyName	-> \sur{Ploeg}
%% Suffix	-> \sfx{IV}
%% NatureName	-> \tanm{Poet Laureate} -> Title after name
%% Degrees	-> \dgr{MSc, PhD}
%% \author*[1,2]{\pfx{Dr} \fnm{Joergen W.} \spfx{van der} \sur{Ploeg} \sfx{IV} \tanm{Poet Laureate} 
%%                 \dgr{MSc, PhD}}\email{iauthor@gmail.com}
%%=============================================================%%

\author*[1]{\fnm{Ryosuke} \sur{Shimmura}}\email{shimmura@sigmath.es.osaka-u.ac.jp}

\author[1]{\fnm{Joe} \sur{Suzuki}}\email{j-suzuki.jyou.es@osaka-u.ac.jp}

\affil[1]{\orgdiv{Graduate School of Engineering Science}, \orgname{Osaka University}, \orgaddress{\street{Toyonaka}, \postcode{560-8531},  \country{Japan}}}

\abstract{
In this paper, we propose new methods to efficiently solve convex optimization problems encountered in sparse estimation, which include a new quasi-Newton method that avoids computing the Hessian matrix and improves efficiency, and we prove its fast convergence. We also prove the local convergence of the Newton method under weaker assumptions. Our proposed methods offer a more efficient and effective approach, particularly for $L_1$ regularization and group regularization problems, as they involve variable selection with each update.
Through numerical experiments, we demonstrate the efficiency of our methods in solving problems encountered in sparse estimation. Our contributions include theoretical guarantees and practical applications for various problems.
}

\keywords{Linear Newton approximation, variable selection, quasi-Newton method, nonsmooth optimization}

\maketitle

\section{Introduction}
Statistics and machine learning are widely used in various fields, including science, engineering, and business, to analyze large datasets and extract meaningful insights. One common problem in these fields is to identify the most significant variables related to a predictor, which allows for accurate predictions and a better understanding of the underlying relationships. This problem is often encountered in sparse estimation, where a large number of variables must be considered, and only a small subset of them is expected to be relevant.

In this paper, we propose a new method to efficiently solve convex optimization problems encountered in statistics and machine learning, particularly those involving sparse estimation. We consider optimization problems of the form:
\begin{align}
    \label{eq:optp}
    \min_{x\in \R^n} f(x) + g(x)
\end{align}
for convex $f:\R^n \rightarrow \R,\;g: \R^n \rightarrow (-\infty, \infty]$, where $f$ is a loss function that is twice differentiable and $\mu$-strongly convex ($\mu>0$) and $g$ is a regularization term that is closed convex. We define strong convexity in Section \ref{subsec:convexity}.
Most sparse estimation problems can be formulated as (\ref{eq:optp}). For example, for $\lambda>0$, $f(x)=\|A x-b\|_{2}^{2}$ and $g(x)=\lambda\|x\|_{1}$ in lasso \citep{lasso} and $g(x)=\lambda\|x\|_{2}$ in group lasso \citep{grouplasso}, where $\|\cdot\|_{2}$ and $\|\cdot\|_{1}$ are the $L_{2}$-norm and $L_{1}$-norm, respectively.
To solve this problem efficiently, we propose methods to find a fixed point of the proximal gradient method, which can be used more broadly than in sparse estimation.

The proximal gradient and proximal Newton methods are commonly used to solve similar optimization problems, but they have limitations. The proximal gradient method can perform each update quickly, but it converges slowly and requires many updates. On the other hand, the proximal Newton method converges rapidly, but the computational cost of each update becomes high. Moreover, there are some issues with the efficiency of the proximal Newton method, mainly when applied to group sparsity problems.

In recent research, a method for finding fixed points of the proximal gradient method has been discussed in \citep{xiao2018regularized}. The semismooth Newton method can be used to solve this problem, and it is algorithmically equivalent to the approach proposed in \citep{patrinos2013proximal, patrinos2014forward, stella2017forward}. Additionally, stochastic methods have been suggested as an alternative \citep{milzarek2019stochastic, yang2021stochastic}. However, both methods require the Lipschitz constant for the first derivative of the objective function ($\nabla f$), and there have been no reports on their convergence when this constant is unknown or absent. In this study, we prove the convergence of the semismooth Newton method when the Lipschitz condition of $\nabla f$ is eliminated, and we extend the theory. Recently, a similar method using the semismooth Newton method has been proposed to find a fixed point of ADMM, which can efficiently obtain high-precision solutions \citep{li2017semi, ali2017semismooth}.

To overcome these limitations, we propose new methods that finds the fixed point of the proximal gradient method efficiently, even when the Lipschitz constant is unknown. We also extend the theory to prove the convergence of the semismooth Newton method under such conditions. Additionally, we introduce a new quasi-Newton method that approximates only the second derivative of the loss function to avoid computing the Hessian matrix and improve efficiency.

The main contributions of this study are: (1) proving the local convergence of the semismooth Newton method under weaker assumptions, (2) proposing a new quasi-Newton method that avoids computing the Hessian matrix and establishing its superlinear convergence, and (3) demonstrating the efficiency of the proposed methods in solving convex optimization problems encountered in statistics and machine learning through numerical experiments.

Overall, our proposed methods offer a more efficient and effective way to solve convex optimization problems encountered in sparse estimation. 
Especially in sparse estimation techniques such as $L_1$ regularization and group regularization, our proposed method can efficiently find solutions by performing variable selection using the proximal gradient method with each update.

The remainder of the paper is organized as follows. In Section \ref{sec:background}, we provide background knowledge for understanding this paper. Section \ref{sec:newton} presents the semismooth Newton method and proves its local convergence. Section \ref{sec:quasinewton} presents the new quasi-Newton method and proves its local convergence. In Section
\ref{sec:experiment}, we empirically evaluate the performance of the proposed methods.
Finally, Section \ref{sec:conclusion} summarizes the results of this paper and describes future work.

\section{Background}
\label{sec:background}
\subsection{Convex Function and Its Subdifferential}
In this section, we provide background information for understanding the results in the later sections.
\label{subsec:convexity}
We say that a function $f:\R^n \rightarrow \R$ is {\it convex} if
\begin{align}
    \label{totsusei}
    f((1-\lambda)x+\lambda y)\leq(1-\lambda)f(x)+\lambda f(y)
\end{align}
for any $x,y\in \R^n$ and $0\leq \lambda \leq 1$.
In particular, we say that the convex function $f$ is {\it closed} if
$\{x\in {\mathbb R}^n\vert f(x)\leq \alpha \}$ is a closed set for each $\alpha\in {\mathbb R}$.
Moreover, we say that $f$ is {\it $\mu$-strongly convex} if
\begin{align}
    f((1-\lambda)x+\lambda y)\leq(1-\lambda)f(x)+\lambda f(y) - \frac{\mu}{2}\lambda (1- \lambda) \|x-y\|_2^2
\end{align}
for any $x,y \in \R^n$ and $0 \leq \lambda \leq 1$. When $f$ is twice differentiable, $f$ is {\it $\mu$-strongly convex} if and only if $\nabla^2 f(x) - \mu I$ is positive semidefinite for any $x\in \R^n$ \citep{bauschke2011convex}.
%closed if for each {\displaystyle \alpha \in \mathbb {R} }\alpha \in {\mathbb  {R}}, the sublevel set

For a convex function $f:\R^n\rightarrow \R$, we define the {\it subdifferential} of $f$ at $x_0\in \R^n$ by the set of $z\in \R^n$ such that
\begin{align}
	\label{retsubi}
	f(x)\geq f(x_0)+\langle z , x-x_0 \rangle
\end{align}
for any $x\in \R^n$ and denote it as $\partial f(x_0)$.
For example, the subdifferential of $f(x)=|x|,x\in \R$ at $x=0$ is the set of $z$ such that $|x|\geq zx,x\in \R$,
and we write $\partial f(0)=\{z\in \R \mid |z|\leq 1\}$.

\subsection{Proximal Gradient Method}
\label{subsec:PG}
The proximal gradient method finds the minimum solution of an objective function expressed as the sum of convex functions $f,g$ that
are differentiable and not necessarily differentiable, respectively.
We define the functions
\begin{align}
	\label{kinsetuq}
	Q_\eta(x,y)&:=f(y)+\langle x-y,\nabla f(y)\rangle +\frac{1}{2\eta}\|x-y\|_2^2+g(x)\\
	\label{kinsetup}
	p_\eta(y)&:=\argmin_x \;\; Q_\eta (x,y)
\end{align}
for $\eta>0$ and generate the sequence $\{x^{(k)}\}$ via
\begin{equation}
        \label{eq:proxup}
	x^{(k+1)}:= \prox_{\eta g}(y-\eta \nabla f(x^{(k)}))
\end{equation}
from the initial value $x^{(0)}$ until convergence to obtain the solution.
If we define the {\it proximal map} with respect to $h:\R^n\rightarrow (-\infty, \infty]$ by
\begin{align}
	\label{kinsetusyazou}
	\prox_{h}(x)=\argmin_x \left\{h(x)+\frac{1}{2}\|y-x\|_2^2 \right\},
\end{align}
then (\ref{kinsetup}) can be expressed as
\begin{align}
	p_{\eta}(y)&=\argmin_x Q_\eta (x,y)\nonumber \\
	&=\argmin_x \left\{\langle x-y, \nabla f(y) \rangle +\frac{1}{2\eta}\|x-y\|_2^2+g(x)\right\} \nonumber \\
	&=\argmin_x \left\{g(x)+\frac{1}{2\eta}\|x-y-\eta \nabla f(y)\|_2^2\right\} \nonumber \\
	\label{kinprox}
	&=\prox_{\eta g}(y-\eta \nabla f(y)) . 
\end{align}
In each iteration, the proximal gradient is used to search for $x$ that minimizes
the sum of the quadratic approximation of $f(x)$ around $x^{(k)}$ and $g(x)$.
The iterative shrinkage-thresholding algorithm (ISTA) procedure obtains $O(k^{-1})$ accuracy for $k$ updates \citep{Beck2009}.

Even if we replace the update (\ref{eq:proxup}) with the ISTA,
they do not necessarily converge to $x$, which minimizes the objective function unless we choose an appropriate parameter $\eta$.
In the following, we assume that $\nabla f$ is Lipschitz continuous, which means that there exists $L_f>0$ such that for arbitrary $x,y\in \R^n$,
\begin{align}
	\label{lipschitz}
	\| \nabla f(x)-\nabla f(y) \|_2\leq L_f \| x -y \|_2 \ .
\end{align}
It is known that the ISTA converges to $x$ that minimizes the objective function
if we choose $\eta>0$ as $0<\eta\leq L_f^{-1}$ \citep{Beck2009}.

\subsection{Proximal Newton Method}

The proximal Newton method finds the minimum solution of (\ref{eq:optp}). In each iteration $k$, the proximal Newton method approximates $f(x)$ as
\begin{align}
    f(x)\approx f(x^{(k)})+ \nabla f(x^{(k)})^T x + \frac{1}{2}x^T \nabla^2 f(x^{(k)})x 
\end{align}
and generates the sequence $\{x^{(k)}\}$ via
\begin{align}
    \label{eq:proxnew}
    \tilde{x}^{(k)} &= \argmin_{x\in \R^n} f(x^{(k)})+ \nabla f(x^{(k)})^T x + \frac{1}{2}x^T \nabla^2 f(x^{(k)})x + g(x) \\
    x^{(k+1)} &= x^{(k)} + \eta_k (\tilde{x}^{(k)} - x^{(k)})  ,\;\;\;  0 < \eta_k \leq 1
\end{align}
from the initial value $x^{(1)}$ until convergence to obtain the solution. In this paper, we consider only the case where $\eta_k = 1$. If $g(x)= \lambda \|x\|_1\; (\lambda \geq  0)$, subproblem (\ref{eq:proxnew}) is equivalent to a lasso problem \citep{lasso}, and $x$ can be updated efficiently by the coordinate descent method.

However, if $g(x) = \lambda \|x\|_2$, the subproblem (\ref{eq:proxnew}) is equivalent to a group lasso problem \citep{grouplasso}, which is solved using the proximal gradient method; hence, the high convergence speed of the proximal Newton method cannot be realized.

\subsection{Optimality Conditions}
\begin{proposition}[\rm{\cite{xiao2018regularized}, Lemma 2.1}]
    \label{prop:opt}
    Suppose that $f:\R^n \rightarrow \R$ is a differentiable convex function and $g:\R^n \rightarrow (-\infty,\infty]$ is a closed convex function. Then, the following are equivalent for all $\nu > 0$:
    \begin{align}
        x^* &\in \argmin_{x\in \R^n} f(x)+g(x) \\
        0 &\in \nabla f(x^*) + \partial g(x^*) \\
        x^* &= \prox_{\nu g} (x^* - \nu \nabla f(x^*))
    \end{align}
\end{proposition}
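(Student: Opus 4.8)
The plan is to run the standard cycle of equivalences, using only Fermat's rule for convex functions, the subdifferential sum rule in its trivial (everywhere-differentiable) case, and the variational characterization of the proximal map.

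First I would dispatch the equivalence of the first two conditions. Since $f+g$ is closed convex, $x^*$ minimizes $f+g$ over $\R^n$ if and only if $0\in\partial(f+g)(x^*)$; this is immediate from the definition (\ref{retsubi}) of the subdifferential, because $0\in\partial h(x^*)$ says exactly that $h(x)\ge h(x^*)+\langle 0,x-x^*\rangle=h(x^*)$ for all $x$. I would then invoke the sum rule $\partial(f+g)(x^*)=\nabla f(x^*)+\partial g(x^*)$. Here no constraint qualification is needed: $f$ is finite and differentiable on all of $\R^n$, so $\partial f(x^*)=\{\nabla f(x^*)\}$ and $\mathrm{dom}\,f=\R^n$, which is precisely the easy case of the Moreau--Rockafellar theorem. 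Combining the two statements yields the equivalence of the first and second displayed conditions.

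Next I would prove the equivalence of the second and third conditions for an arbitrary fixed $\nu>0$. Scaling the inclusion $0\in\nabla f(x^*)+\partial g(x^*)$ by $\nu>0$ and using $\partial(\nu g)=\nu\,\partial g$, the second condition is equivalent to $(x^*-\nu\nabla f(x^*))-x^*\in\partial(\nu g)(x^*)$. On the other hand, by definition $z=\prox_{\nu g}(y)$ is the (unique) minimizer of $w\mapsto \nu g(w)+\frac12\|w-y\|_2^2$, and applying Fermat's rule and the sum rule once more to this objective shows $z=\prox_{\nu g}(y)$ if and only if $y-z\in\partial(\nu g)(z)$. Taking $y=x^*-\nu\nabla f(x^*)$ and $z=x^*$, this membership is exactly the rearranged second condition, giving the desired equivalence.

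The argument is essentially bookkeeping, so I do not anticipate a genuine obstacle. The one point deserving a remark is the use of the subdifferential sum rule, but since one summand is everywhere differentiable this is the elementary case and requires no qualification. I would also note, to justify writing the conclusion of the third condition as an equality rather than a set membership, that $\prox_{\nu g}$ is single-valued: the proximal objective is strongly convex, and closedness of $g$ guarantees that the minimizer exists and is attained.
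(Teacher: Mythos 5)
Your proof is correct. The paper gives no proof of this proposition—it is simply quoted from Xiao et al.\ (2018, Lemma 2.1)—and your argument (Fermat's rule, the elementary sum rule with one everywhere-differentiable summand, and the variational characterization of $\prox_{\nu g}$) is exactly the standard argument behind the cited result; the only tacit hypothesis worth recording is that $g$ is proper, so that $\partial g(x^*)$ and $\prox_{\nu g}$ are nonempty and well-defined.
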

We define the function $F_\nu : \R^n \rightarrow \R^n$ for $\nu > 0$ as
\begin{align}
    \label{eq:Fnu}
    F_\nu (x):= x- \prox_{\nu g} (x - \nu \nabla f(x)).
\end{align}
From Proposition \ref{prop:opt}, $x$ such that $F_\nu (x) = 0$ minimizes (\ref{eq:optp}). Therefore, by solving the nonlinear equation $F_\nu (x) = 0$, we can find $x$ that minimizes (\ref{eq:optp}). In this paper, we consider Newton and quasi-Newton methods that solve $F_\nu (x) = 0$ for all $\nu > 0$. However, previous research \citep{patrinos2013proximal,patrinos2014forward,stella2017forward,xiao2018regularized} considered Newton methods for $\nu \leq 2L_f^{-1}$.

In addition, considering the updating equation of the proximal gradient method
\begin{align}
    x^{(k+1)}= \prox_{\nu g} (x^{(k)}-\nu \nabla f(x^{(k)})),
\end{align}
$F_\nu$ can be interpreted as the difference $F_\nu (x^{(k)}) =x^{(k)} -x^{(k+1)}$ in the proximal gradient method.

\subsection{Linear Newton Approximations}

\label{subsec:LNA}
\begin{definition}
If $\mathcal{A}(x)$ is a subset of $\R^{n \times n}$ for each $x\in \R^n$, then $\mathcal{A}$ is called a {\it set-valued function}, and we write $\mathcal{A}:\R^n \rightrightarrows \R^{n\times n}$. A set-valued function $\mathcal{A} \R^n \rightrightarrows \R^{n\times n}$ is {\it upper-semicontinuous} at $x\in \R^n$ if for any $\epsilon > 0 $, there exists $\delta > 0 $ such that for all $y\in \R^n$,
\begin{align}
    \|x-y\|_2<\delta \; \Rightarrow \; \mathcal{A}(y) \subset \mathcal{A}(x) + {\mathbb B}(O,\epsilon),
\end{align}
where $O$ is a matrix with all elements zero, ${\mathbb B}(B, \delta) := \{A\in \R^{n\times n} \mid \|A-B\| < \delta \}$, and $\|\cdot \|$ denotes the operator norm.
\end{definition}

We will apply the Newton method using the derivative with respect to $F_\nu$ defined in (\ref{eq:Fnu}), but in general, $\prox_{\nu g}$ is not differentiable.
However, $\prox_{\nu g}$ is Lipschitz continuous with parameter 1, which means that $\forall x\in \R^n, \forall y \in \R^n , \|\prox_{\nu g}(x)-\prox_{\nu g}(y)\|_2 \leq \|x-y\|_2$. Thus, we define a B-subdifferential, which generalizes the derivative for Lipschitz continuous functions, as follows.

\begin{definition}
    Let $F:\R^n \rightarrow \R^n$ be Lipschitz continuous. The B-subdifferential of $F$ at $x\in \R^n$ is
    \begin{align}
        \label{eq:B-dif}
		\partial_B F(x) = \left\{ V\in \R^{n\times n} \mid \exists \{x^{(k)}\} \subset \mathcal{D}_F , such\; that \;\; x^{(k)}\rightarrow x, \;\; \nabla F(x^{(k)}) \rightarrow V \right\},
    \end{align}
    where $\mathcal{D}_F$ is the subset of $\R^n$ for which $F$ is differentiable, i.e., the B-subdifferential is the set of $V$ such that there exists a sequence $\{x^{(k)}\}$ that satisfies the following three conditions: 1. $F$ is differentiable for all $x^{(k)}$, 2. $x^{(k)} \rightarrow x$, and 3. $\nabla F(x^{(k)}) \rightarrow V$.
\end{definition}    

If $F:\R^n \rightarrow \R^n$ is Lipschitz continuous, then $\partial_B F(x)$ is a nonempty and compact subset of $\R^{n \times n}$, and the set-valued function $\partial_B F$ is upper-semicontinuous at every $x\in \R^n$ \citep[proposition 2.2]{ulbrich2011semismooth}. If $F$ is differentiable at $x$, then $\partial_B F(x) = \{\nabla F(x) \}$. In particular, if $f:\R^n \rightarrow \R$ is twice differentiable at $x$, then $\partial_B\left(\nabla f(x) \right) = \{\nabla^2 f(x)\}$. In this paper, we approximate $F_\nu$ defined in (\ref{eq:Fnu}) using $\partial_B \prox_{\nu g}$, which is the B-subdifferential of $\prox_{\nu g}$. Thus, it is important to approximate $F_\nu$, for which we define the following linear Newton approximation.

\begin{definition}[\rm{\cite{facchinei2003finite}, Definition 7.5.13}]
    Let $F:\R^n \rightarrow \R^n$ be continuous. We say that a set-valued function $\calA:\R^n \rightrightarrows \R^{n \times n}$ is a linear Newton approximation (LNA) of $F$ at $x\in \R^n$ if $\calA$ has compact images and is upper-semicontinuous at $x$ and
    \begin{align}
        \|F(x)-F(y) - A(x-y)\|_2 = o(\|x-y\|_2) \;\; as \; \;y \rightarrow x.
    \end{align}
    for $y \in \R^n$ and any $A\in \calA(y)$. If instead
    \begin{align}
        \|F(x)-F(y) - A(x-y)\|_2 = O(\|x-y\|_2^2) \;\; as \; \; y \rightarrow x
    \end{align}
    for $y \in \R^n$ and any $A\in \calA(y)$, then we say that $\calA:\R^n \rightrightarrows \R^{n \times n}$ is a strong linear Newton approximation (strong LNA) of $F$ at $x\in \R^n$.
\end{definition}

If $F$ has an LNA, then there exists a matrix $A$ that can approximate $F(y)-F(x)$. For example, if $F(x) = x$ and $\calA(x) = \{I\}$ for any $x$, then
\[
    \|x-y-I(x-y)\|_2 = 0
\]
for any $x,y\in \R^n$ and $\calA$ is a strong LNA of $F(x)=x$ for every $x\in \R^n$. In general, if $f:\R^n \rightarrow \R$ is twice differentiable at $x\in \R^n$, then by using $\nabla^2 f: \R^n \rightarrow \R^{n \times n}$, which is the Hessian of $f$, and setting $\calB (x) = \{\nabla^2 f(x)\}$ for any $x\in \R^n$, we find that $\calB$ is an LNA of $\nabla f$ for every $x$ and
\[
    \|\nabla f(x)-\nabla f(y)-\nabla^2 f(y)(x-y)\|_2=o(\|x-y\|_2) \;\; as \;\; y\rightarrow x
\]
holds. In particular, if $\nabla^2 f$ is Lipschitz continuous, i.e., there exists $L_f>0$ such that $\| \nabla^2 f(x) -\nabla^2 f(y) \| \leq L_f \|x-y\|_2$ for any $x,y \in \R^n$, then $\calB$ is a strong LNA of $\nabla f$ for every $x$.
Here, since $I$ and $\nabla^2 f$ are continuous functions on $\R^n$, it is apparent that both $\calA$ and $\calB$ are upper semicontinuous.
However, if $F$ is not differentiable, we need to determine whether $\partial_B F$ is an LNA of $F$. In this paper, we construct an LNA of $\prox_{\nu g}$ using $\partial_B \prox_{\nu g}$.

An LNA has similar properties to ordinary derivatives, and the linearity and chain rule can be expressed as follows.

\begin{lemma}[\rm{\cite{facchinei2003finite}, Corollaly 7.5.18}]
    \label{lem:linear}
    Suppose that set-valued functions $\calA:\R^n \rightrightarrows \R^{n \times n},\calB:\R^n \rightrightarrows \R^{n \times n}$ are (strong) LNAs of $F:\R^n \rightarrow \R^n$ and $G:\R^n \rightarrow \R^n$, respectively, at $x\in \R^n$. Then,
    \[
        (\calA + \calB)(y):= \{A+B\mid A\in \calA(y),\; B \in \calB(y) \}
    \]
    is a (strong) LNA of $F+G$ at $x$.
\end{lemma}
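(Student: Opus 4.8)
The plan is to verify directly the three defining properties of a linear Newton approximation for the set-valued function $\calC := \calA + \calB$ at the point $x$: compactness of its images, upper-semicontinuity at $x$, and the first-order (respectively second-order, in the strong case) approximation estimate for $F+G$. Each of these will be deduced from the corresponding property of $\calA$ and of $\calB$, so the proof is essentially a bookkeeping argument built on the definition of an LNA and the triangle inequality.

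First I would check compactness: for each $y\in\R^n$ the image $\calC(y)=\calA(y)+\calB(y)$ is the image of the compact set $\calA(y)\times\calB(y)\subset\R^{n\times n}\times\R^{n\times n}$ under the continuous map $(A,B)\mapsto A+B$, hence compact. Next I would establish upper-semicontinuity of $\calC$ at $x$. Given $\epsilon>0$, use the upper-semicontinuity of $\calA$ and $\calB$ to pick $\delta_1,\delta_2>0$ with $\|x-y\|_2<\delta_1\Rightarrow\calA(y)\subset\calA(x)+{\mathbb B}(O,\epsilon/2)$ and $\|x-y\|_2<\delta_2\Rightarrow\calB(y)\subset\calB(x)+{\mathbb B}(O,\epsilon/2)$, and set $\delta=\min\{\delta_1,\delta_2\}$. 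If $\|x-y\|_2<\delta$ and $C\in\calC(y)$, write $C=A+B$ with $A\in\calA(y)$, $B\in\calB(y)$, and then $A=A'+E_A$, $B=B'+E_B$ with $A'\in\calA(x)$, $B'\in\calB(x)$, $\|E_A\|<\epsilon/2$, $\|E_B\|<\epsilon/2$; since $\|E_A+E_B\|\le\|E_A\|+\|E_B\|<\epsilon$, we get $C=(A'+B')+(E_A+E_B)\in\calC(x)+{\mathbb B}(O,\epsilon)$.

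Finally I would prove the approximation estimate. Fix $y\in\R^n$ and any $C\in\calC(y)$, and decompose $C=A+B$ with $A\in\calA(y)$, $B\in\calB(y)$. By the triangle inequality,
\begin{align*}
\|(F+G)(x)-(F+G)(y)-C(x-y)\|_2 &\le \|F(x)-F(y)-A(x-y)\|_2 \\
&\quad + \|G(x)-G(y)-B(x-y)\|_2 .
\end{align*}
Since $\calA$ is an LNA of $F$ at $x$, the first term is $o(\|x-y\|_2)$ as $y\to x$, uniformly over $A\in\calA(y)$; since $\calB$ is an LNA of $G$ at $x$, the second term is $o(\|x-y\|_2)$ uniformly over $B\in\calB(y)$. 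Adding the two remainder bounds gives $\|(F+G)(x)-(F+G)(y)-C(x-y)\|_2=o(\|x-y\|_2)$, so $\calC$ is an LNA of $F+G$ at $x$. In the strong case each term on the right is $O(\|x-y\|_2^2)$, and the identical argument yields the $O(\|x-y\|_2^2)$ bound, so $\calC$ is then a strong LNA of $F+G$ at $x$.

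I do not expect a genuine obstacle. The one point that needs care is the meaning of the $o(\cdot)$ and $O(\cdot)$ estimates in the definition of an LNA: they must be read uniformly over all matrices in the relevant image $\calA(y)$ (resp. $\calB(y)$), i.e.\ each norm is bounded by a single scalar remainder function of $\|x-y\|_2$ that does not depend on the chosen matrix. Once this uniformity is made explicit, summing two such remainder functions stays in the same class and the triangle inequality closes the argument; making this bookkeeping precise is the only slightly delicate step.
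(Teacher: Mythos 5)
Your argument is correct: compactness of $\calA(y)+\calB(y)$ as the continuous image of a compact product, upper-semicontinuity via $\epsilon/2$ for each summand, and the triangle-inequality splitting of the remainder give exactly the (strong) LNA property for $F+G$. Note that the paper itself offers no proof of this lemma—it is quoted from Facchinei and Pang (Corollary 7.5.18)—and your direct verification is the standard textbook argument, with the uniformity-over-$\calA(y)$ caveat you raise being the only point requiring care.
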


\begin{lemma}[\rm{\cite{facchinei2003finite}, Theorem 7.5.17}]
    \label{lem:chain}
    Suppose that the set-valued function $\calB:\R^n \rightrightarrows \R^{n \times n}$ is a (strong) LNA of $G:\R^n \rightarrow \R^n$ at $x\in \R^n$ and that $\calA:\R^n \rightrightarrows \R^{n \times n}$ is a (strong) LNA of $F:\R^n \rightarrow \R^n$ at $G(x)$. Then,
    \[
        (\calA \calB)(y) := \{AB \mid A\in \calA(G(y)), \; B\in \calB(y) \}
    \]
    is a (strong) LNA of $F\circ G$ at $x$, where $F \circ G$ is the composition of the mappings $F\circ G(x) = F(G(x))$.
\end{lemma}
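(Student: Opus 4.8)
The plan is to verify, for the candidate set-valued map $\calA\calB$ at $x$, the three requirements in the definition of a (strong) LNA: compact images, upper-semicontinuity at $x$, and the first-order estimate $\|(F\circ G)(x)-(F\circ G)(y)-M(x-y)\|_2=o(\|x-y\|_2)$ for every $M\in(\calA\calB)(y)$ as $y\to x$ (respectively the $O(\|x-y\|_2^2)$ estimate in the strong case). Compactness of the images is immediate: $(\calA\calB)(y)$ is the image of the compact set $\calA(G(y))\times\calB(y)\subset\R^{n\times n}\times\R^{n\times n}$ under the continuous matrix-multiplication map. For upper-semicontinuity, I would combine the upper-semicontinuity of $\calB$ at $x$ with that of $\calA$ at $G(x)$ together with the continuity of $G$ (which is part of $\calB$ being an LNA of $G$): given $\epsilon>0$, choose $\delta$ so small that $\|x-y\|_2<\delta$ forces both $\calB(y)\subset\calB(x)+{\mathbb B}(O,\epsilon_1)$ and $\calA(G(y))\subset\calA(G(x))+{\mathbb B}(O,\epsilon_2)$; then for $M=AB$ with $A\in\calA(G(y))$, $B\in\calB(y)$, pick $A'\in\calA(G(x))$, $B'\in\calB(x)$ with $\|A-A'\|<\epsilon_2$, $\|B-B'\|<\epsilon_1$, so that $\|AB-A'B'\|\le\|A\|\,\|B-B'\|+\|B'\|\,\|A-A'\|$, and use the uniform bounds on $\|A\|$ and $\|B'\|$ (finite by compactness/usc near $x$ and near $G(x)$) to make this $<\epsilon$ by shrinking $\epsilon_1,\epsilon_2$.

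The heart of the argument is the approximation estimate, which I would obtain from the algebraic splitting
\begin{align*}
&(F\circ G)(x) - (F\circ G)(y) - AB(x-y) \\
&\quad = \bigl[F(G(x)) - F(G(y)) - A(G(x)-G(y))\bigr] + A\bigl[(G(x)-G(y)) - B(x-y)\bigr]
\end{align*}
valid for any $A\in\calA(G(y))$ and $B\in\calB(y)$. For the second bracket, $\bigl\|A\bigl[(G(x)-G(y))-B(x-y)\bigr]\bigr\|_2\le\|A\|\,\|(G(x)-G(y))-B(x-y)\|_2$; the LNA property of $\calB$ at $x$ makes the last factor $o(\|x-y\|_2)$ (resp.\ $O(\|x-y\|_2^2)$), and $\|A\|$ is bounded uniformly for $y$ near $x$ by upper-semicontinuity and compactness of $\calA$ at $G(x)$ together with continuity of $G$. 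For the first bracket, I would apply the LNA property of $\calA$ at $G(x)$, substituting the perturbation $G(y)\to G(x)$ (legitimate since $G(y)\to G(x)$ as $y\to x$): this bracket is $o(\|G(x)-G(y)\|_2)$ (resp.\ $O(\|G(x)-G(y)\|_2^2)$). To convert this back into a bound in $\|x-y\|_2$ I would first record that $\|G(x)-G(y)\|_2=O(\|x-y\|_2)$, which follows from $\|G(x)-G(y)-B(x-y)\|_2=o(\|x-y\|_2)$ and the uniform bound on $\|B\|$ for $B\in\calB(y)$ with $y$ near $x$. Summing the two estimates yields the desired $o(\|x-y\|_2)$ (resp.\ $O(\|x-y\|_2^2)$) bound, uniformly in $M=AB\in(\calA\calB)(y)$.

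The main obstacle is bookkeeping of a specific kind: the $o(\cdot)$ (or $O(\cdot)$) in the definition of an LNA must be read uniformly over all matrices in the relevant image, and the estimate for $\calA$ has to be used at the shifted base point $G(y)$ rather than at $x$. Making the chain of substitutions rigorous therefore hinges on having the uniform operator-norm bounds supplied by ``compact images $+$ upper-semicontinuity,'' and on the elementary but essential fact that an LNA of $G$ at $x$ forces $G$ to be locally Lipschitz-bounded near $x$ in the sense $\|G(x)-G(y)\|_2=O(\|x-y\|_2)$. Once these uniform bounds are in place, the remaining steps are the routine triangle-inequality manipulations sketched above, and the strong-LNA case goes through verbatim with $o$ replaced by $O(\cdot^2)$ throughout.
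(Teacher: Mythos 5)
Your proof is correct. The paper itself offers no argument for this lemma --- it is quoted directly from Facchinei and Pang (Theorem 7.5.17) --- and your splitting of $(F\circ G)(x)-(F\circ G)(y)-AB(x-y)$ into the two brackets, together with the uniform operator-norm bounds from compactness and upper-semicontinuity and the observation that the LNA property of $\calB$ forces $\|G(x)-G(y)\|_2=O(\|x-y\|_2)$, is essentially the standard proof of that cited result, in both the ordinary and the strong case.
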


From Lemmas \ref{lem:linear} and \ref{lem:chain}, as in ordinary differential calculus, when the function for which an LNA is to be obtained is expressed as a sum of multiple functions or their composite map, it is sufficient to consider an LNA of each function. For example, we suppose the set-valued functions $\calA,\calB: \R^n \rightrightarrows \R^{n\times n}$ are (strong) LNAs of $F,G:\R^n \rightarrow \R^n$ at $x\in \R^n$ and that $calC:\R^n \rightrightarrows \R^{n\times n}$ is a (strong) LNA of $H:\R^n \rightarrow \R^n$ at $F(x)+G(x)$.
Then, 
\[
\calC (\calA + \calB)(y) := \{C(A+B) \mid C\in \calC(F(y)+G(y)),\; A\in \calA (y),\; B \in \calB(y) \}
\]
is a (strong) LNA of $H\circ (F+G)$ at $x$.

\subsection{LNA of the proximal map}

\subsubsection{$L_1$ norm}

If $g(x) = \| x\|_1$, the $i$-th component of $\prox_{\nu g}$ is
\begin{align}
    \label{eq:proxL1}
    \prox_{\nu g} (x)_i = \left(1-\frac{\nu}{|x_i|}\right)_+ x_i,
\end{align}
where $(s)_+ = \max\{0,s\}$ for $s\in \R$. (\ref{eq:proxL1}) is differentiable at any $|x_i| \neq \nu$, and its derivative is $0$ for $|x_i| < \nu$ and 1 for $|x_i| > \nu$.
For the case of $|x_i| = \nu$, 
if $x_i^{(k)} \rightarrow x_i, |x_i^{(k)}|\downarrow \nu$ as $k\rightarrow \infty$, then $\nabla \prox_{\nu g} (x^{(k)})_{i,i} \rightarrow 1$.
In contrast, if $x_i^{(k)} \rightarrow x_i, |x_i^{(k)}|\uparrow \nu$ as $k\rightarrow \infty$, then $\nabla \prox_{\nu g} (x^{(k)})_{i} \rightarrow 0$.
Thus, $\partial_B \prox_{\nu g}(x)$ becomes the set of diagonal matrices for any $x\in \R^n$, and its $(i,i)$-th component is
\begin{align}
    \label{eq:LNAL1}
    \partial_B \prox_{\nu g}(x)_{i,i} = 
	\begin{cases}
		\{0\} & |x_i|<\nu \\
		\{1\} & |x_i|>\nu \\
		\{0,1\} & |x_i|=\nu
	\end{cases}.
\end{align}

\subsubsection{$L_2$ norm}

If $g(x) = \|x\|_2$, $\prox_{\nu g}$ is
\begin{align}
    \label{eq:proxL2}
    \prox_{\nu g} (x)= \left(1- \frac{\nu}{\|x\|_2} \right)_+ x .
\end{align}
(\ref{eq:proxL2}) is differentiable at any $\|x\|_2 \neq \nu$, and its derivative is $O$ for $\|x\|_2 < \nu$ and $\frac{\nu}{\|x\|_2}(\frac{xx^T}{\|x\|_2^2}-I) + I$ for $\|x\|_2 > \nu$.
For the case of $\|x\|_2 = \nu$,
if $x^{(k)} \rightarrow x, \|x^{(k)}\| \downarrow \nu$ as $k\rightarrow \infty$, then $\nabla \prox_{\nu g} (x^{(k)})  \rightarrow \frac{\nu}{\|x\|_2}(\frac{xx^T}{\|x\|_2^2}-I) + I$.
In contrast, if $x^{(k)} \rightarrow x, \|x^{(k)}\|_2\uparrow \nu$ as $k\rightarrow \infty$, then $\nabla \prox_{\nu g} (x^{(k)}) \rightarrow O$.
Thus, $\partial_B \prox_{\nu g}(x)$ is the set of symmetric matrices for any $x\in \R^n$, and 
\begin{align}
    \label{eq:LNAL2}
    \partial_B \prox_{\nu g}(x) = 
    \begin{cases}
	\{O\} & \|x\|_2<\nu \\
	\left\{\frac{\nu}{\|x\|_2}(\frac{xx^T}{\|x\|_2^2}-I) + I\right\} & \|x\|_2>\nu \\
	\left\{O,\frac{xx^T}{\|x\|_2^2}\right\} & \|x\|_2=\nu
    \end{cases}
    .
\end{align}

\begin{lemma}[\cite{zhang2020efficient}, Lemma 2.1]
    \label{lem:l1l2sLNA}
    (\ref{eq:LNAL1}) and (\ref{eq:LNAL2}) are strong LNAs of $\prox_{\nu \| \cdot \|_1}$ and $\prox_{\nu \| \cdot \|_2}$, respectively, for any $x\in \R^n$.
\end{lemma}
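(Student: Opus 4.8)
The plan is to verify the strong LNA estimate $\|\prox_{\nu g}(x) - \prox_{\nu g}(y) - A(x-y)\|_2 = O(\|x-y\|_2^2)$ as $y \to x$ for every $A \in \partial_B\prox_{\nu g}(y)$, separately for the two proximal maps and separately according to the location of the base point $x$ relative to the boundary set where $\prox_{\nu g}$ fails to be differentiable (the set $\{|x_i| = \nu\}$ in the $L_1$ case, the sphere $\{\|x\|_2 = \nu\}$ in the $L_2$ case). Upper semicontinuity and compactness of the images are immediate from the explicit formulas (\ref{eq:LNAL1}) and (\ref{eq:LNAL2}), so the entire content is the residual bound. Since $\prox_{\nu\|\cdot\|_1}$ acts coordinatewise, the $L_1$ case reduces to a one-dimensional statement about the soft-thresholding map $s \mapsto (1 - \nu/|s|)_+ s = \mathrm{sign}(s)(|s|-\nu)_+$, and I would dispose of it first.

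For the $L_1$ case I would split on the position of $x_i$. If $|x_i| \neq \nu$, then soft-thresholding is affine (either identically $0$ or $s \mapsto s - \nu\,\mathrm{sign}(s)$) on a whole neighborhood of $x_i$, so for $y$ close enough to $x$ the residual is exactly $0$; this is where we use that $\partial_B$ at such $y$ is the singleton $\{0\}$ or $\{1\}$ matching the affine piece. If $|x_i| = \nu$, then for $y$ near $x$ either $|y_i|$ is on the same side as a limiting derivative and the residual is $0$, or $|y_i|$ is on the opposite side; in the opposite-side case both $|x_i - y_i|$ and the difference of the two affine pieces are $O(||y_i| - \nu|) = O(|x_i - y_i|)$, and crucially $|x_i - y_i| \le \|x-y\|_2$, while the mismatch only occurs when $y_i$ is within $O(\|x-y\|_2)$ of the threshold — so the residual is $O(\|x-y\|_2)\cdot[\text{something }O(\|x-y\|_2)]$... but one must be careful: the clean argument is that at the base point $x$ with $|x_i|=\nu$, for \emph{any} $A_{ii}\in\{0,1\}$ the quantity $|\,\prox(x)_i - \prox(y)_i - A_{ii}(x_i-y_i)\,|$ is bounded by $\min\{1,|A_{ii}-1|\}$-type terms times $|x_i-y_i|$ but actually equals $0$ or $||y_i|-\nu| \le |x_i - y_i|$, and since this can be linear rather than quadratic one instead observes $\prox(x)_i = 0$ there, so the expression is $|\prox(y)_i + A_{ii}(x_i - y_i)|$ and each summand is $O(|x_i-y_i|)$ — hence I would follow \cite{zhang2020efficient} in sharpening to: the residual vanishes unless $y_i$ lies strictly on the far side, where $\prox(y)_i = \mathrm{sign}(y_i)(|y_i|-\nu)$ with $|y_i|-\nu$ and $|x_i-y_i|$ both small and of the same order, giving the $O(\|x-y\|_2^2)$ bound after noting the near-threshold constraint forces a second small factor.

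For the $L_2$ case the structure is the same but the away-from-sphere piece $\prox_{\nu g}(x) = (1 - \nu/\|x\|_2)x$ is smooth (real-analytic) rather than affine, so when $\|x\|_2 \neq \nu$ I would invoke the second-order Taylor estimate for this smooth map: its Hessian is bounded near $x$, so $\|\prox(x) - \prox(y) - \nabla\prox(y)(x-y)\|_2 = O(\|x-y\|_2^2)$, and $\partial_B\prox(y)$ is the singleton $\{\nabla\prox(y)\}$ there. When $\|x\|_2 = \nu$, I would again split: if $\|y\|_2$ and the chosen matrix $A$ are on consistent sides the residual is controlled by a one-sided Taylor expansion plus the fact that $A \to \nabla\prox(x\pm)$ as $y$ approaches from that side; if they are on opposite sides one uses $\prox(x) = \lim$ from both sides (value $0$ on the sphere) together with the quantitative bounds $\|x - y\|_2 = O(|\|y\|_2 - \nu|)$ when $\|y\|_2 < \nu$, etc. The main obstacle — and the step I would spend the most care on — is this opposite-side boundary case: showing the mismatch between the true increment and $A(x-y)$ is genuinely second order and not merely first order. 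The resolution (following the cited lemma) is that such a mismatch can only occur when $y$ is within distance $O(\|x-y\|_2)$ of the critical set, which pins $\big|\|y\|_2 - \nu\big| = O(\|x-y\|_2)$, and then the "surviving" part of $\prox(y)$ (namely $(1-\nu/\|y\|_2)y$ or its difference from the other affine/smooth branch) is itself $O\big(\big|\|y\|_2-\nu\big|\big) = O(\|x-y\|_2)$, so the product is $O(\|x-y\|_2^2)$. Finally I would assemble the coordinatewise $L_1$ bounds via $\|\cdot\|_2^2 = \sum_i |\cdot|_i^2$ and conclude.
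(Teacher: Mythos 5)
The paper does not prove this lemma at all --- it is imported verbatim from Zhang et al.\ (2020, Lemma 2.1) --- so your proposal has to stand on its own. Your skeleton is the right one (coordinatewise reduction to soft-thresholding for the $L_1$ norm, case analysis on where the base point $x$ sits relative to the non-differentiability set, Taylor expansion of the smooth branch for the $L_2$ norm, upper semicontinuity read off the explicit formulas), but at the step you yourself flag as decisive --- the boundary cases --- the argument you give does not actually produce a quadratic bound.

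In both boundary discussions you try to get the $O(\|x-y\|_2^2)$ estimate from ``two small factors'': the distance of $y$ to the threshold/sphere is $O(\|x-y\|_2)$ and the surviving branch value is $O(\|x-y\|_2)$, ``so the product is $O(\|x-y\|_2^2)$.'' But the residual $\prox_{\nu g}(x)-\prox_{\nu g}(y)-A(x-y)$ is a \emph{sum} of such terms, not a product, so this reasoning only yields a first-order bound; as written the crucial estimate is not established. The actual mechanisms are different. For $g=\|\cdot\|_1$ no quadratic estimate is needed: since $A$ is drawn from $\partial_B \prox_{\nu g}(y)$ (at $y$, not at $x$), its $i$-th diagonal entry is always the slope of the affine piece containing $y_i$, and both affine pieces of soft-thresholding pass through the breakpoint $(\pm\nu,0)$; e.g.\ for $x_i=\nu$ and $y_i>\nu$ the contribution is $0-(y_i-\nu)-1\cdot(\nu-y_i)=0$, so the residual vanishes identically in a neighborhood of $x$ and the ``opposite-side'' scenario you worry about never arises. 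For $g=\|\cdot\|_2$ with $\|x\|_2=\nu$ the key identity is that the smooth branch $Q(z)=(1-\nu/\|z\|_2)z$ vanishes on the sphere, so $Q(x)=\prox_{\nu g}(x)=0$: for $\|y\|_2\ge\nu$ every admissible $A$ other than $O$ equals $\nabla Q(y)$ (including $A=yy^T/\nu^2$ when $\|y\|_2=\nu$), and the residual is exactly the Taylor remainder $Q(x)-Q(y)-\nabla Q(y)(x-y)$, which is $O(\|x-y\|_2^2)$ because $\nabla Q$ is locally Lipschitz near the sphere; for $\|y\|_2\le\nu$ with $A=O$ the residual is $\|\prox_{\nu g}(x)-\prox_{\nu g}(y)\|_2=0$. (If you prefer the explicit computation in the sphere-to-sphere case, it rests on $y^T(x-y)=-\tfrac12\|x-y\|_2^2$ when $\|x\|_2=\|y\|_2=\nu$, giving residual $\|x-y\|_2^2/(2\nu)$.) Also, your inequality ``$\|x-y\|_2=O(|\,\|y\|_2-\nu\,|)$'' is backwards: what holds is $|\,\|y\|_2-\nu\,|\le\|x-y\|_2$. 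With these corrections your outline becomes a correct proof, but the product-of-small-factors step as proposed would not survive scrutiny.
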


From Lemma \ref{lem:l1l2sLNA}, if $g$ is either the $L_1$-norm or the $L_2$-norm, then the B-subdifferential is a strong LNA of $\prox_{\nu g}$.
Moreover, if $\prox_{\nu g}(x)_i$ is 0, i.e. $x_i$ is inactive, then the corresponding component of $\partial_B \prox_{\nu g}$ is 0.

\section{Linear Newton Method}

\label{sec:newton}
Here, we consider the linear Newton method for solving $F_\nu(x) = 0$ for any $\nu > 0$. According to Proposition \ref{prop:opt}, $x$ such that $F_\nu (x) = 0$ minimizes (\ref{eq:optp}). First, we consider an LNA of $F_\nu$ to execute the linear Newton method. From Lemmas \ref{lem:linear} and \ref{lem:chain}, since the LNA is linear and satisfies the chain rule, we define the set-valued function $\partial F_\nu : \R^n \rightrightarrows \R^{n\times n}$ as
\begin{align}
    \label{eq:FnuLNA}
    \partial F_\nu(x) = \left\{ I- V(I-\nu \nabla^2 f(x)) \mid V \in \partial_B \prox_{\nu g} (x-\nu \nabla f(x))\right\},
\end{align}
which is an LNA of $F_\nu (x) = x-\prox_{\nu g}(x-\nu \nabla f(x))$. We show that (\ref{eq:FnuLNA}) is a (strong) LNA of $F_\nu$ as follows.
\begin{proposition}
    \label{prop:LNA}
    Let $x\in \R^n$. If $\partial_B \prox_{\nu g}$ is an LNA of $\prox_{\nu g}$ at $x-\nu \nabla f(x)$, then $\partial F_\nu$ is an LNA of $F_\nu$ at $x$. Moreover, if $\partial_B \prox_{\nu g}$ is a strong LNA of $\prox_{\nu g}$ at $x-\nu \nabla f(x)$ and $\nabla^2 f$ is Lipschitz continuous, then $\partial F_\nu$ is a strong LNA of $F_\nu$ at $x$.
\end{proposition}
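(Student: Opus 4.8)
The plan is to assemble a (strong) LNA of $F_\nu$ from (strong) LNAs of its building blocks by repeated use of the calculus rules in Lemmas \ref{lem:linear} and \ref{lem:chain}, exactly as one differentiates a composition. Write $F_\nu = \mathrm{id} - \prox_{\nu g}\circ G$, where $\mathrm{id}(x)=x$ and $G(x):=x-\nu\nabla f(x)$. First I would handle the inner map $G$. The excerpt already records that $y\mapsto\{I\}$ is a strong LNA of $\mathrm{id}$ at every point and that $y\mapsto\{\nabla^2 f(y)\}$ is an LNA of $\nabla f$ at every point, strong when $\nabla^2 f$ is Lipschitz. The same estimate, multiplied by $\nu$, namely $\|-\nu\nabla f(x)+\nu\nabla f(y)+\nu\nabla^2 f(y)(x-y)\|_2=\nu\cdot o(\|x-y\|_2)$, shows that $y\mapsto\{-\nu\nabla^2 f(y)\}$ is an LNA of $-\nu\nabla f$ (strong when $\nabla^2 f$ is Lipschitz), with compact images and upper semicontinuity inherited from continuity of $\nabla^2 f$. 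Lemma \ref{lem:linear} then gives that $\mathcal{G}(y):=\{\,I-\nu\nabla^2 f(y)\,\}$ is an LNA of $G$ at $x$, and a strong LNA when $\nabla^2 f$ is Lipschitz.

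Next, by hypothesis $\partial_B\prox_{\nu g}$ is an LNA of $\prox_{\nu g}$ at $G(x)=x-\nu\nabla f(x)$, and since $G$ is continuous we have $G(y)\to G(x)$ as $y\to x$, so the chain rule (Lemma \ref{lem:chain}) with outer map $\prox_{\nu g}$ and inner map $G$ yields that $(\partial_B\prox_{\nu g}\cdot\mathcal{G})(y)=\{\,V(I-\nu\nabla^2 f(y))\mid V\in\partial_B\prox_{\nu g}(G(y))\,\}$ is an LNA of $\prox_{\nu g}\circ G$ at $x$. Negating a (strong) LNA again yields a (strong) LNA — the estimate is unchanged and $A\mapsto -A$ preserves compactness and upper semicontinuity — and $\{I\}$ is a strong LNA of $\mathrm{id}$, so a final application of Lemma \ref{lem:linear} to $\mathrm{id}$ and $-\prox_{\nu g}\circ G$ shows that $y\mapsto\{\,I-V(I-\nu\nabla^2 f(y))\mid V\in\partial_B\prox_{\nu g}(y-\nu\nabla f(y))\,\}$ is an LNA of $F_\nu$ at $x$; this set-valued map is exactly $\partial F_\nu$ in (\ref{eq:FnuLNA}). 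For the strong statement one feeds the strong LNAs throughout: the strong LNA of $\prox_{\nu g}$ at $G(x)$ from the hypothesis, the strong LNA of $G$ at $x$ from the first paragraph, and the strong LNA $\{I\}$ of $\mathrm{id}$.

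The one point that needs real care is the strong version of the chain-rule step. For the composition error to be $O(\|x-y\|_2^2)$ and not merely $O(\|G(x)-G(y)\|_2^2)$, one needs the inner map $G$ to be locally Lipschitz near $x$; this is exactly where the hypothesis that $\nabla^2 f$ is Lipschitz is used, since it forces $\nabla f$, hence $G$, to be locally Lipschitz, while $\prox_{\nu g}$ is globally $1$-Lipschitz, so the strong chain rule of Lemma \ref{lem:chain} legitimately applies. Everything else — that constant scaling, negation, and finite sums preserve compact images and upper semicontinuity — is routine bookkeeping that follows from continuity of $\nabla^2 f$ and the properties of $\partial_B\prox_{\nu g}$ already established in the excerpt.
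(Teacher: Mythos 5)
Your proof is correct and follows essentially the same route as the paper: decompose $F_\nu$ as the identity minus $\prox_{\nu g}\circ(\mathrm{id}-\nu\nabla f)$, build an LNA of the inner affine-in-derivative map via Lemma \ref{lem:linear}, compose with $\partial_B\prox_{\nu g}$ via Lemma \ref{lem:chain}, and add the identity back, with the strong case obtained by feeding strong LNAs through the same steps. The only slight difference is your closing remark attributing the Lipschitz hypothesis on $\nabla^2 f$ to the strong chain rule; in the paper its role is simply to make $\{-\nu\nabla^2 f\}$ a strong LNA of $-\nu\nabla f$ (as you also use in your first paragraph), and the local Lipschitzness of the inner map that you flag is indeed available and causes no gap.
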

\begin{proof}
    $\mathcal{A}:\R^n \ni y \mapsto \{I\}$ is an LNA of $F:\R^n \ni y \mapsto y \in \R^n$ at $x\in \R^n$. Since $\nabla f$ is differentiable, $\calB:\R^n \ni y \mapsto \{-\nu \nabla^2 f(y)\}$ is an LNA of $G:\R^n \ni y \mapsto -\nu \nabla f(y) \in \R^n$ at $x$. Thus, from Lemma \ref{lem:linear}, $\calA + \calB = \{I - \nu \nabla^2 f\}$ is an LNA of
    \[
        F+G : \R^n \ni y \mapsto y-\nu \nabla f(y) \in \R^n
    \]
    at $x$. By assumption, $\calC :\R^n \ni y \mapsto \partial_B \prox_{\nu g}(y)$ is an LNA of $H: \R^n \ni y \mapsto \prox_{\nu g}(y)$ at $x-\nu \nabla f(x)$. From Lemma \ref{lem:chain}, if we define the set-valued function $\partial P_\nu : \R^n \rightrightarrows \R^{n\times n}$ as 
    \[
        \partial P_\nu (x):= \calC(\calA + \calB) = \left\{ V(I-\nu \nabla^2f)\right\},
    \]
    then $\partial P_\nu$ is an LNA of $H\circ(F+G) : \R^n \ni y \mapsto \prox_{\nu g}(y-\nu \nabla f(y))\in \R^n$ at $x$. Furthermore, applying Lemma \ref{lem:linear}, we find that $\partial F_\nu$ is an LNA of $F_\nu$, which proves the first claim. If $\nabla^2 f$ is Lipschitz continuous, then $\calB$ is a strong LNA of $G$ at $x$, so we conclude that $\partial F_\nu$ is a strong LNA of $F_\nu$ at $x$.
\end{proof}

From Proposition \ref{prop:LNA}, if $\partial_B \prox_{\nu g}$ is a (strong) LNA of $\prox_{\nu g}$, since $g$ is the $L_1$ regularization or a group regularization, then $\partial F_\nu$ is an LNA of $F_\nu$ and can approximate $F_\nu$. Thus, we conclude that $\partial_B \prox_{\nu g}$ is an LNA of $\prox_{\nu g}$, which is important for the discussion below. Next, we consider the linear Newton method based on $\partial F_\nu$.

\subsection{Procedure}
\label{subsec:alLNM}
From Proposition \ref{prop:LNA}, which states that $\partial F_\nu$ is an LNA of $F_\nu$, we can approximate $F_\nu$ using $\partial F_\nu$. Specifically, we approximate
\begin{align}
    \label{eq:approx}
    F_\nu(x)\approx F_\nu^{(k)}(x):= F_\nu(x^{(k)}) + U^{(k)} (x - x^{(k)})  ,\;\;\; U^{(k)} \in \partial F_\nu (x^{(k)}) 
\end{align}
for $k=1,2,\ldots$ with the initial value $x^{(1)}\in \R^n$, and we update $x^{(k+1)}$ such that $F_\nu^{(k)} (x^{(k+1)})= 0$. We show the procedure of the linear Newton method in Algorithm \ref{alg:L-newton}.
\begin{algorithm}[h]
	\caption{(Linear Newton Method) input: $x^{(1)}$, output: $x^{\infty}$}
	\label{alg:L-newton}
        $\nu>0$ is initialized, and steps \ref{enum:LNMd} and \ref{enum:LNMx} are repeated for $k = 1,2,\ldots$ until convergence.
	\begin{enumerate}
		
		\item (Obtaining $d$)
		\label{enum:LNMd}
		$I-V^{(k)}(I-\nu \nabla^2 f(x^{(k)}))\in \partial F_\nu (x^{(k)})$ for $V^{(k)} \in \partial_B \prox_{\nu g}(x^{(k)}-\nu \nabla f(x^{(k)}))$ is chosen, and the selected search direction $d^{(k)}\in \R^n$ is
		\begin{align}
			\label{eq:newtonup}
			d^{(k)} := -\left(I-V^{(k)}\left(I-\nu \nabla^2 f(x^{(k)})\right) \right)^{-1}F_\nu(x^{(k)}).
		\end{align}
		\item (Updating $x$)
        \label{enum:LNMx}
		\begin{align}
			x^{(k+1)} := x^{(k)} + d^{(k)}.
		\end{align}
        \item When $x^{(k)}$ converges,
        \begin{align*}
            x^{\infty} = x^{(k+1)}.
        \end{align*}
	\end{enumerate}
\end{algorithm}

In this paper, we prove the following proposition, namely, Proposition \ref{prop:nonsingular}, to guarantee that the inverse matrix of $I - V^{(k)}\left(I-\nu \nabla^2 f(x^{(k)})\right)$ always exists and that the update of (\ref{eq:newtonup}) is always possible. The proof is presented in detail in the appendix.
\begin{proposition}
    \label{prop:nonsingular}
    Suppose that $f:\R^n \rightarrow \R$ and $g:\R^n \rightarrow (-\infty, \infty]$ are $\mu$-strongly convex and closed convex, respectively. Then, $I-V\left( I - \nu \nabla^2 f(x) \right)$ is a nonsingular matrix for any $x\in \R^n, \nu>0$ and $V\in \partial_B \prox_{\nu g}(x-\nu \nabla f(x))$, and all eigenvalues are positive real numbers.
\end{proposition}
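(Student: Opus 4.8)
The plan is to reduce the claim to a spectral statement about a single \emph{symmetric} matrix, using the classical identity $\det(xI-AB)=\det(xI-BA)$ for square $A,B$ of the same size. Write $H:=\nabla^2 f(x)$, $M:=I-\nu H$, and let $V\in\partial_B\prox_{\nu g}(x-\nu\nabla f(x))$, so the matrix in question is $I-VM$. Two structural facts drive the argument. First, since $f$ is twice differentiable and $\mu$-strongly convex, $H\succeq\mu I\succ 0$, so $H^{1/2}$ exists. Second, $V$ is symmetric with all eigenvalues in $[0,1]$, i.e.\ $0\preceq V\preceq I$; in particular $V^{1/2}$ exists. These two facts are all I will use about $f$ and $g$.

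To establish the second fact in the generality stated (only ``$g$ closed convex''), I would recall that $\prox_{\nu g}$ is, up to a linear term, the gradient of the Moreau envelope of $g$: one has $\prox_{\nu g}=\nabla h$ with $h(z)=\tfrac12\|z\|_2^2-\nu\,e_\nu(z)$, where $e_\nu$ is the Moreau envelope of $g$ at scale $\nu$. The function $h$ is $C^1$; it is convex because $\nabla h=\prox_{\nu g}$ is monotone (a consequence of firm nonexpansiveness of the proximal map), and $\nabla h$ is $1$-Lipschitz because $\prox_{\nu g}$ is nonexpansive. Hence, wherever $\prox_{\nu g}$ is differentiable, its Jacobian equals $\nabla^2 h$, which is symmetric (Schwarz), positive semidefinite (convexity of $h$), and has norm at most $1$ (the $1$-Lipschitz bound), i.e.\ lies between $O$ and $I$. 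Passing to limits of such Jacobians, every $V\in\partial_B\prox_{\nu g}$ is symmetric with $0\preceq V\preceq I$. (For the concrete regularizers in the paper this is also immediate from the explicit formulas (\ref{eq:LNAL1}) and (\ref{eq:LNAL2}).)

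With both facts available, set $S:=V^{1/2}MV^{1/2}=V-\nu V^{1/2}HV^{1/2}$, a symmetric matrix. Writing $VM=V^{1/2}\bigl(V^{1/2}M\bigr)$ and $S=\bigl(V^{1/2}M\bigr)V^{1/2}$, the characteristic-polynomial identity gives $\det(xI-VM)=\det(xI-S)$, so $VM$ and $S$ have exactly the same eigenvalues; in particular all eigenvalues of $VM$ are real. It then suffices to show $\lambda_{\max}(S)<1$, for then every eigenvalue of $I-VM$ is of the form $1-\lambda$ with $\lambda\in\R$, $\lambda<1$, hence is a positive real number, and $I-VM$ is nonsingular. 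For a unit vector $w$, using $H\succeq\mu I$ and then $0\preceq V\preceq I$,
\[
w^{T}Sw=w^{T}Vw-\nu\,(V^{1/2}w)^{T}H(V^{1/2}w)\le w^{T}Vw-\nu\mu\,w^{T}Vw=(1-\nu\mu)\,w^{T}Vw .
\]
If $\nu\mu\ge 1$ the right-hand side is $\le 0<1$; if $\nu\mu<1$ it is $\le(1-\nu\mu)\,w^{T}Vw\le 1-\nu\mu<1$. In either case $\lambda_{\max}(S)<1$, completing the proof.

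The step I expect to be the main obstacle is establishing that every element of $\partial_B\prox_{\nu g}$ is symmetric and lies between $O$ and $I$: this is the single place where the hypothesis ``$g$ closed convex'' is genuinely used, via firm nonexpansiveness of $\prox_{\nu g}$ and its representation as the gradient of a convex, $1$-Lipschitz-gradient function; once this is in hand, the similarity reduction and the eigenvalue bookkeeping are routine. A secondary point needing care is the case split $\nu\mu\ge 1$ versus $\nu\mu<1$ used to obtain the \emph{strict} bound $\lambda_{\max}(S)<1$ for \emph{all} $\nu>0$, which is precisely what allows the conclusion to hold for every $\nu>0$ rather than only for sufficiently small $\nu$.
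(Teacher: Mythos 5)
Your proof is correct and follows essentially the same route as the paper's: the paper quotes (Stella et al., Theorem 3.2) for the fact that every $V\in\partial_B\prox_{\nu g}$ is symmetric positive semidefinite with $\|V\|\le 1$, and its eigenvalue lemma for a product $AB$ is proved by exactly your $V^{1/2}$-conjugation/Rayleigh-quotient argument, giving $\lambda_{\max}\bigl(V(I-\nu\nabla^2 f(x))\bigr)\le\max\{1-\nu\mu,\,0\}<1$ and hence positive real eigenvalues of $I-V(I-\nu\nabla^2 f(x))$. The only differences are cosmetic: you derive the structural property of $V$ from the Moreau-envelope representation rather than citing it, and you pass to the symmetric matrix via $\det(xI-AB)=\det(xI-BA)$ where the paper argues directly with eigenvectors (treating the $x^TVx=0$ case separately).
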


It was known in \citep{xiao2018regularized} that if $\nu < L_f^{-1}$, then all eigenvalues of $I-V\left(I - \nu \nabla^2 f(x)\right)$ are nonnegative real numbers, but it was not known whether they are nonsingular or singular. Proposition \ref{prop:nonsingular} in this paper shows that for general $\nu > 0$, $I-V\left(I - \nu \nabla^2 f(x)\right)$ is always nonsingular for any $x\in \R^n$ if $f$ is $\mu$-strongly convex. Thus, since any element of $\partial F_\nu(x^{(k)})$ is nonsingular for each iteration $k$, there exists $d^{(k)}$ such that (\ref{eq:newtonup}) is satisfied and the update of Algorithm \ref{alg:L-newton} is always possible.

\subsection{$L_1$ Regularization}
Based on (\ref{eq:LNAL1}), we define the diagonal matrix $V^{(k)} \in \partial_B \prox_{\nu g} (x^{(k)}-\nu \nabla f(x^{(k)}))$ as
\begin{align}
    V_{i,i}^{(k)} = 
    \begin{cases}
		0,  & |x^{(k)}_i- \nu \nabla f(x^{(k)})|\leq\nu \lambda \\
		1,  & |x^{(k)}_i-\nu \nabla f(x^{(k)})|>\nu \lambda\\
    \end{cases}
\end{align}
and obtain $I-V^{(k)} \left( I - \nu \nabla^2 f(x^{(k)} \right)\in \partial F_\nu (x^{(k)})$. If we define the index sets $\mathcal{I}^{(k)}, \mathcal{O}^{(k)}$ as
\begin{align*}
	\mathcal{I}^{(k)} &= \{i | V_{ii}^{(k)} = 1\}\\
	\mathcal{O}^{(k)} &= \{i | V_{ii}^{(k)} = 0\},
\end{align*}
then we can express the matrix as
\begin{align}
    I-V^{(k)}(I-\nu \nabla^2 f(x^{(k)})) = 
	\begin{pmatrix}
		\nu \nabla^2 f(x^{(k)})_{\mathcal{I}^{(k)},\mathcal{I}^{(k)}} && \nu \nabla^2 f(x^{(k)})_{\mathcal{I}^{(k)},\mathcal{O}^{(k)}}  \\
		O && I
	\end{pmatrix},
\end{align}
where $\nabla^2 f(x^{(k)})_{\mathcal{I}^{(k)},\mathcal{I}^{(k)}}$ and $\nabla^2 f(x^{(k)})_{\mathcal{I}^{(k)},\mathcal{O}^{(k)}}$ are the elements of the matrix in $(\mathcal{I}^{(k)}, \mathcal{I}^{(k)})$ and $(\mathcal{I}^{(k)}, \mathcal{O}^{(k)})$, respectively. Thus, we can efficiently update by eliminating the calculation for the components $i$ such that $V_{i, i}^{(k)}=0$, i.e., $\prox_{\nu g} (x^{(k)}- \nu \nabla f(x^{(k)}))=0$.

\subsection{Convergence}
We consider the convergence properties of Algorithm \ref{alg:L-newton}.

\begin{theorem}
    \label{thm:convergencenewton}
    Suppose $\partial F_\nu$ is an LNA of $F_\nu$ at the optimal solution $x^*$ and all elements of $\partial F_\nu (x)$ are nonsingular for any $x\in \R^n $. Then, the sequence $\{x^{(k)}\}_{k=1}^\infty$ generated by Algorithm \ref{alg:L-newton} converges locally superlinearly to $x^*$ such that $F_\nu (x^*) = 0$. Moreover, if $\partial F_\nu$ is a strong LNA of $F_\nu$ at the optimal solution $x^*$, then the sequence $\{x^{(k)}\}_{i=1}^\infty$ generated by Algorithm \ref{alg:L-newton} converges locally quadratically to $x^*$.
\end{theorem}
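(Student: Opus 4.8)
The plan is to run the standard local convergence argument for Newton-type methods built on a linear Newton approximation (cf. \cite{facchinei2003finite}, Theorem 7.5.15). I would write one step of Algorithm \ref{alg:L-newton} as $x^{(k+1)} = x^{(k)} - (U^{(k)})^{-1} F_\nu(x^{(k)})$ with $U^{(k)} \in \partial F_\nu(x^{(k)})$, which is well-defined since every element of $\partial F_\nu$ is assumed nonsingular. Using $F_\nu(x^*) = 0$ and subtracting $x^*$ from both sides gives
\[
x^{(k+1)} - x^* = (U^{(k)})^{-1}\bigl[\, F_\nu(x^*) - F_\nu(x^{(k)}) - U^{(k)}(x^* - x^{(k)}) \,\bigr].
\]
So the whole proof reduces to two ingredients: (i) a uniform bound $\|(U^{(k)})^{-1}\| \le M$ for iterates in some ball around $x^*$, and (ii) the residual estimate $\|F_\nu(x^*) - F_\nu(x^{(k)}) - U^{(k)}(x^* - x^{(k)})\|_2 = o(\|x^{(k)} - x^*\|_2)$, which is precisely the defining property of an LNA of $F_\nu$ at $x^*$, applied with $y = x^{(k)}$ and $A = U^{(k)}$.

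For (i) I would argue as follows. Since $\partial F_\nu(x^*)$ is compact (an LNA has compact images) and every matrix in it is nonsingular, there is $c > 0$ with $\sigma_{\min}(A) \ge c$ for all $A \in \partial F_\nu(x^*)$. Upper semicontinuity of $\partial F_\nu$ at $x^*$ then supplies $\delta_1 > 0$ with $\partial F_\nu(y) \subset \partial F_\nu(x^*) + \mathbb{B}(O, c/2)$ for all $\|y - x^*\|_2 < \delta_1$; since a perturbation of operator norm $< c/2$ changes the smallest singular value by at most $c/2$, every such matrix has $\sigma_{\min} \ge c/2$, hence is nonsingular with inverse of operator norm at most $M := 2/c$. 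Combining with (ii): I would shrink to $\delta \le \delta_1$ so small that on $B(x^*,\delta)$ the $o$-term in (ii) is at most $\frac{1}{2M}\|x^{(k)} - x^*\|_2$; then the displayed identity gives $\|x^{(k+1)} - x^*\|_2 \le \frac{1}{2}\|x^{(k)} - x^*\|_2$, so $B(x^*,\delta)$ is invariant under the iteration, $x^{(k)} \to x^*$, and since $\|x^{(k+1)} - x^*\|_2 \le M\, o(\|x^{(k)} - x^*\|_2)$ the ratio $\|x^{(k+1)} - x^*\|_2 / \|x^{(k)} - x^*\|_2 \to 0$, i.e., superlinear convergence. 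For the second statement, if $\partial F_\nu$ is a strong LNA at $x^*$ the residual in (ii) is $O(\|x^{(k)} - x^*\|_2^2)$, and the same chain of inequalities yields $\|x^{(k+1)} - x^*\|_2 \le C\|x^{(k)} - x^*\|_2^2$ for a constant $C$, i.e., quadratic convergence.

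The hard part will be making step (i) airtight: upper semicontinuity only gives an inclusion into an $\epsilon$-neighborhood of $\partial F_\nu(x^*)$, not an equality, so the uniform invertibility must be obtained through a perturbation-stable lower bound on the smallest singular value, which is exactly where compactness of $\partial F_\nu(x^*)$ (yielding a strictly positive $c$) is essential. A lesser point to handle carefully is that the $o(\cdot)$ in the LNA definition is uniform over the choice $A \in \partial F_\nu(y)$, so the residual bound in (ii) holds for whichever $U^{(k)}$ the algorithm picks; and the two radii — the $\delta_1$ from invertibility and the radius from the $o$-term — must simply be intersected. Beyond these points the argument is routine, so I would not expect surprises in the remaining calculations.
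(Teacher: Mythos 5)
Your argument is correct, and it is essentially the same as the paper's: the paper simply cites Theorem 2.11 of Hinterm\"uller and Theorem 7.5.15 of Facchinei--Pang, whose proofs are exactly the Newton error identity plus the uniform-invertibility-near-$x^*$ argument (compactness of $\partial F_\nu(x^*)$, upper semicontinuity, singular-value perturbation) and the LNA/strong-LNA residual estimate that you spelled out. So your proposal amounts to writing out the proof of the cited results rather than invoking them, with no substantive difference in route.
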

\begin{proof}
    By assumption, since $\partial F_\nu$ is a (strong) LNA of $F_\nu$ at the optimal solution $x^*$ and $A$ is a nonsingular matrix for any $x\in \R^n $ and $A\in \partial F_\nu(x)$, the proof of this theorem follows from Theorem 2.11 in reference \citep{hintermuller2010semismooth} and Theorem 7.5.15 in reference \citep{facchinei2003finite}.
\end{proof}
From Proposition \ref{prop:nonsingular}, if $f$ is $\nu$-strongly convex, all elements of $\partial F_\nu (x)$ are nonsingular for any $x\in \R^n$. Thus, in the cases of $L_1$ regularization, group regularization, etc., local quadratic convergence is achieved when updating with Algorithm \ref{alg:L-newton} due to Theorem \ref{thm:convergencenewton}. Here, Theorem \ref{thm:convergencenewton} suggests that the parameter $\nu$ of $F_\nu$ is arbitrary as long as $\nu > 0$. Therefore, Theorem \ref{thm:convergencenewton} shows the convergence of Algorithm \ref{alg:L-newton} in the general case without requiring $\nu \leq 2 L_f^{-1}$ as in \citep{xiao2018regularized}, thereby extending previous results.

\section{Hybrid Linear Quasi-Newton Method}
\label{sec:quasinewton}
Executing Algorithm \ref{alg:L-newton} is often hard because the calculation of $\nabla^2 f$ is time-consuming and may not be possible. Thus, we consider approximating $\partial F_\nu (x^{(k)})$ by $x^{(k)}$ in each iteration $k$. In this paper, we consider approximating $\nabla^2 f(x^{(k)})$ to make the computation of $\partial F_\nu (x^{(k)})$ feasible while having the linear Newton method's advantage that the calculation can be omitted when $x$ equals 0. Specifically, we define a new set-valued function $\hat{\partial}^{(k)}F_\nu:\R^n \rightrightarrows \R^{n \times n}$ as
\[
\hat{\partial}^{(k)}F_\nu (x^{(k)}) =  \left\{ I- V(I-\nu B^{(k)}) \mid V \in \partial_B \prox_{\nu g} (x^{(k)}-\nu \nabla f(x^{(k)}))\right\}
\]
using an approximation matrix $B^{(k)}$ at each iteration and approximate $F_\nu$ as in (\ref{eq:approx}).

\subsection{Procedure}
Since $\partial F_\nu$ is an LNA of $F_\nu$, if $B^{(k)}$ can successfully approximate $\nabla^2 f(x^{(k)})$ in each iteration $k$, then $\hat{\partial}^{(k)}F_\nu (x^{(k)})$ can be used to successfully approximate $F_\nu$. To generate the approximation matrix $B^{(k)}$, we specify an initial value $B^{(1)}\in \R^{n\times n}$ and update $B^{(k)}$ to satisfy the secant condition
\begin{align}
    \label{eq:secant}
    B^{(k+1)}\left(x^{(k+1)}-x^{(k)}\right) = \nabla f(x^{(k+1)})-\nabla f(x^{(k)}).
\end{align}
There are several update strategies that satisfy (\ref{eq:secant}), such as the Broyden method, and this paper uses the Broyden–Fletcher–Goldfarb–Shanno (BFGS) method in (\ref{eq:BFGS}). By updating with the BFGS formula, if $f$ is $\mu$-strongly convex and the initial value $B^{(1)}$ is a positive definite symmetric matrix, then $B^{(k)}>O$ holds. Thus, similar to Proposition \ref{prop:nonsingular}, it can be shown that all elements of $\hat{\partial}^{(k)}F_\nu (x^{(k)})$ are regular for all $k$. We present the procedure of the hybrid linear quasi-Newton method in Algorithm \ref{alg:quasi-newton}.

In \citep{xiao2018regularized}, a method utilizing the L-BFGS method is proposed, which approximates $\partial F_\nu$ using the L-BFGS method. The matrix updated with the L-BFGS method becomes symmetric, but $\partial F_\nu$ is generally not symmetric. Therefore, this approximation might not be accurate. Additionally, \citep{xiao2018regularized} does not provide a proof of convergence speed and requires the condition $\nu \leq 2L_f^{-1}$. 
We have proven that our proposed quasi-Newton method achieves superlinear convergence, which is a significant result.

\begin{algorithm}[h]
	\caption{(Hybrid Linear Quasi-Newton Method) input:$x^{(1)}$, output:$x^{\infty}$}
	\label{alg:quasi-newton}
        $\nu>0,B^{(1)}\in \R^{n\times n}$ is initialized, and steps \ref{enum:HQd}, \ref{enum:HQx}, and \ref{enum:HQB} are repeated for $k = 1,2,\ldots$ until convergence.
	\begin{enumerate}
		\item (Obtaining $d$)
		\label{enum:HQd}
				
$I-V^{(k)}(I-\nu B^{(k)})\in \hat{\partial}^{(k)} F_\nu (x^{(k)})$ for $V^{(k)} \in \partial_B \prox_{\nu g}(x^{(k)}-\nu \nabla f(x^{(k)}))$ is chosen, and then the search direction $d^{(k)}\in \R^n$ is selected as
		\begin{align}
			\label{eq:quasiup}
			d^{(k)} := -\left(I - V^{(k)}\left(I-\nu B^{(k)}\right)\right)^{-1}F_\nu (x^{(k)}).
		\end{align}
		\item(Updating $x$)
        \label{enum:HQx}
		\begin{align}
			x^{(k+1)} := x^{(k)} + d^{(k)}.
		\end{align}
		\item (Updating $B$)
		\label{enum:HQB}
		Let $s^{(k)} = x^{(k+1)}-x^{(k)},y^{(k)} = \nabla f(x^{(k+1)}) - \nabla f(x^{(k)})$. $B$ is updated as
		\begin{align}
			\label{eq:BFGS}
			B^{(k+1)} := B^{(k)} - \frac{B^{(k)} s^{(k)} (s^{(k)})^T B^{(k)}}{(s^{(k)})^T B^{(k)} s^{(k)}}+\frac{y^{(k)} (y^{(k)})^{T}}{(y^{(k)})^T s^{(k)}}.
		\end{align}
        \item When $x^{(k)}$ converges,
        \begin{align*}
            x^{\infty} = x^{(k+1)}.
        \end{align*}
	\end{enumerate}
\end{algorithm}

\subsection{Efficiency}
In both Algorithms \ref{alg:L-newton} and \ref{alg:quasi-newton}, solving (\ref{eq:newtonup}) and (\ref{eq:quasiup}) requires computing the inverse of an $n\times n$ matrix, which requires $O(n^3)$ computations. If $n$ is large, computing the inverse matrix requires an enormous amount of time, and it is inefficient to perform the computation in its current form.
Thus, we consider finding the search direction $d^{(k)}$ that satisfies the linear equation
\begin{align}
    \left(I - V^{(k)}\left(I-\nu B^{(k)}\right)\right) d^{(k)} = - F_\nu(x^{(k)}).
\end{align}
In the standard Newton method, the Newton-CG method, which combines the conjugate gradient (CG) method to efficiently solve linear equations, is widely used.
However, the CG method is only applicable to linear equations when the matrix is symmetric. In this case,
\[
\left(I - V^{(k)}\left(I-\nu B^{(k)}\right)\right)
\]
is generally not symmetric, so the conjugate gradient method cannot be used.
Thus, using the generalized conjugate residual (GCR) method, we find $d^{(k)}$ such that
\begin{align}
    \label{eq:gcr-t}
    \left\|F_\nu(x^{(k)})+\left(I - V^{(k)}\left(I-\nu B^{(k)} \right)\right)d^{(k)}\right\|_2\leq \epsilon^{(k)} \|F_\nu(x^{(k)})\|_2,
\end{align}
where $\epsilon^{(k)} > 0$ such as $\epsilon^{(k)} = \frac{1}{k+1}$ is the tolerance defined by the user.
The search direction $d^{(k)}$ is obtained by executing the GCR  method until the approximation error is acceptable. Specifically, we can express Step \ref{enum:HQd} of Algorithm \ref{alg:quasi-newton} as follows.
\begin{enumerate}
    \item
    $I-V^{(k)}(I-\nu B^{(k)})\in \hat{\partial}^{(k)} F_\nu (x^{(k)})$ for $V^{(k)} \in \partial_B \prox_{\nu g}(x^{(k)}-\nu \nabla f(x^{(k)}))$ is chosen, then the search direction $d^{(k)}\in \R^n$ that satisfies (\ref{eq:gcr-t}) is found using the GCR method.
\end{enumerate}

We specify the GCR method for finding $x$ such that $Ax = b$ for $A\in \R^{n \times n},b \in \R^n$ in Algorithm \ref{alg:GCR}.

If the GCR method can find $d^{(k)}$ that satisfies (\ref{eq:gcr-t}) in a finite number of steps, it can be updated only by multiplying a matrix and vector, and the GCR method can update using only $O(n^2)$ computations. We can obtain $d^{(k)}$ in fewer updates, which is more efficient than the $O(n^3)$ computations required to calculate the inverse matrix.
\begin{algorithm}[h]
	\caption{(Generalized Conjugate Residual Method) input: $x^{(1)}$, output: $x^{\infty}$}
	\label{alg:GCR}
		Let $r^{(1)} := b - Ax^{(1)}$ and $p^{(1)} := r^{(1)}$, and steps 1, 2, 3, and 4 are repeated for $k = 1,2,\ldots$ until convergence.
	\begin{enumerate}
				
\item $\alpha^{(k)} := \frac{\langle A p^{(k)}, r^{(k)} \rangle}{\langle Ap^{(k)},Ap^{(k)} \rangle}$
		\item(Updating $x,r$)
		\begin{align}
			x^{(k+1)} &:= x^{(k)} + \alpha^{(k)} p^{(k)} \\
			r^{(k+1)}  &:= r^{(k)} - \alpha^{(k)} A p^{(k)}
		\end{align}
		\item $\beta_{i,k} := - \frac{\langle A p^{(i)}, A r^{(k+1)} \rangle }{\langle Ap^{(i)},Ap^{(i)} \rangle} \;\;\;\; (i = 1,\ldots , k)$
				
\item $p^{(k+1)} := r^{(k+1)} + \sum_{i = 1}^k \beta_{i,k}A p^{(i)}$
\item When $x^{(k)}$ converges,
        \begin{align*}
            x^{\infty} = x^{(k+1)}.
        \end{align*}
	\end{enumerate}
\end{algorithm}\

\subsection{Convergence}
We consider the convergence properties of Algorithm \ref{alg:quasi-newton}. If $\partial_B \prox_{\nu g}$ is an LNA of $\prox_{\nu g}$, then we can show local linear convergence as follows. In this paper, we prove the following theorem in the same way as in \citep{sun1997newton} (see Appendix \ref{apendix:quasi-linear} for the proof).
\begin{theorem}
    \label{thm:quasi-linear}
    Suppose that $\partial_B \prox_{\nu g}$ is an LNA of $\prox_{\nu g}$ at $x^*-\nu \nabla f(x^*)$. There exist $\epsilon>0$ and $\Delta>0$ such that if $\|x^{(1)} - x^* \|_2 <\epsilon$ and $\|B^{(k)} - \nabla^2 f(x^{(k)})\|< \Delta$ for any $k = 1,2,\ldots$, then the sequence generated by Algorithm \ref{alg:quasi-newton} converges locally linearly to $x^*$.
\end{theorem}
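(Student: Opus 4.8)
The plan is to follow the local analysis of nonsmooth quasi-Newton iterations in \citep{sun1997newton}, viewing Algorithm \ref{alg:quasi-newton} as an inexact linear Newton iteration for $F_\nu(x)=0$ in which the exact Newton matrix $U^{(k)}:=I-V^{(k)}(I-\nu\nabla^2 f(x^{(k)}))\in\partial F_\nu(x^{(k)})$ is replaced by $\hat{U}^{(k)}:=I-V^{(k)}(I-\nu B^{(k)})$. First I would collect the preliminary facts. By the hypothesis on $\partial_B\prox_{\nu g}$ and Proposition \ref{prop:LNA}, $\partial F_\nu$ is an LNA of $F_\nu$ at $x^*$; by Proposition \ref{prop:nonsingular} (using $\mu$-strong convexity of $f$) every matrix in $\partial F_\nu(x)$ is nonsingular for every $x$; and since $\prox_{\nu g}$ is $1$-Lipschitz, $\|V^{(k)}\|\le 1$, so $\|\hat{U}^{(k)}-U^{(k)}\|=\nu\|V^{(k)}(B^{(k)}-\nabla^2 f(x^{(k)}))\|<\nu\Delta$. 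Note that only this last inequality, not the BFGS formula, is used in the argument.

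Second, I would establish a uniform invertibility bound in a neighborhood of $x^*$. Since $\partial F_\nu(x^*)$ is compact and all its elements are nonsingular, $\beta_0:=\sup_{U\in\partial F_\nu(x^*)}\|U^{-1}\|<\infty$. Upper-semicontinuity of $\partial F_\nu$ then gives $r_1>0$ such that for $\|x-x^*\|_2\le r_1$ every $U\in\partial F_\nu(x)$ lies within operator-norm distance $1/(2\beta_0)$ of $\partial F_\nu(x^*)$, and a Neumann-series (Banach-lemma) argument yields $\|U^{-1}\|\le 2\beta_0=:\beta$. Shrinking $\Delta$ so that $\beta\nu\Delta<1/2$, the same perturbation argument applied to $\hat{U}^{(k)}=U^{(k)}+(\hat{U}^{(k)}-U^{(k)})$ gives $\|(\hat{U}^{(k)})^{-1}\|\le 2\beta$ whenever $\|x^{(k)}-x^*\|_2\le r_1$, so the iteration is well defined near $x^*$.

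Third comes the contraction estimate. With $e^{(k)}:=x^{(k)}-x^*$ and $F_\nu(x^*)=0$,
\begin{align*}
    e^{(k+1)} &= e^{(k)}-(\hat{U}^{(k)})^{-1}F_\nu(x^{(k)}) \\
    &= (\hat{U}^{(k)})^{-1}\Big[\big(U^{(k)}e^{(k)}-F_\nu(x^{(k)})\big)+\big(\hat{U}^{(k)}-U^{(k)}\big)e^{(k)}\Big].
\end{align*}
Applying the LNA property of $\partial F_\nu$ at $x^*$ with $y=x^{(k)}$ and $A=U^{(k)}\in\partial F_\nu(x^{(k)})$ gives $\|F_\nu(x^{(k)})-U^{(k)}e^{(k)}\|_2=o(\|e^{(k)}\|_2)$, hence there is $r_2>0$ with $\|F_\nu(x^{(k)})-U^{(k)}e^{(k)}\|_2\le\gamma\|e^{(k)}\|_2$ whenever $\|e^{(k)}\|_2\le r_2$, where $\gamma>0$ is at our disposal. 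Combining the three bounds, $\|e^{(k+1)}\|_2\le 2\beta(\gamma+\nu\Delta)\|e^{(k)}\|_2$. Choosing $\gamma$, and then $\Delta$, small enough that $t:=2\beta(\gamma+\nu\Delta)<1$ and $\beta\nu\Delta<1/2$ (e.g. $2\beta\gamma=2\beta\nu\Delta=1/4$), and setting $\epsilon:=\min\{r_1,r_2\}$, an induction shows that $\|x^{(1)}-x^*\|_2<\epsilon$ forces $x^{(k)}$ to remain in the ball of radius $\epsilon$ and $\|e^{(k)}\|_2\le t^{k-1}\|e^{(1)}\|_2$ for all $k$, which is exactly local linear convergence.

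The Neumann-series perturbation steps and the telescoping induction are routine. The step that needs real care is the uniform invertibility bound: Proposition \ref{prop:nonsingular} only gives pointwise nonsingularity (with positive real eigenvalues), and because the matrices $U^{(k)},\hat{U}^{(k)}$ are not symmetric this does not by itself bound $\|(U^{(k)})^{-1}\|$ or $\|(\hat{U}^{(k)})^{-1}\|$; the bound must be drawn from compactness of $\partial F_\nu(x^*)$ together with upper-semicontinuity, and the admissible perturbation radius must be fixed before $\Delta$ so that a single neighborhood controls both $U^{(k)}$ and $\hat{U}^{(k)}$ simultaneously.
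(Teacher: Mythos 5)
Your proof is correct, and its skeleton — the error decomposition $e^{(k+1)}=(\hat U^{(k)})^{-1}\bigl[(U^{(k)}e^{(k)}-F_\nu(x^{(k)}))+(\hat U^{(k)}-U^{(k)})e^{(k)}\bigr]$, the Banach/Neumann perturbation step for the quasi-Newton matrix, and the contraction-plus-induction finish — is exactly the paper's (the paper writes $W^{(k)}$ for your $\hat U^{(k)}$ and invokes Lemma~\ref{lem:norm_mat} where you use a Neumann series). The genuine divergence is where the uniform bound on $\|(U^{(k)})^{-1}\|$ comes from. The paper takes it directly from Proposition~\ref{prop:nonsingular}: since every eigenvalue of $U^{(k)}$ is real and at least $\nu\mu$, it asserts $\|(U^{(k)})^{-1}\|\le\sqrt{n}/(\nu\mu)$ globally, which makes all constants explicit ($\Delta=\mu/(5\sqrt{n})$, contraction factor $1/2$) and requires no compactness or upper-semicontinuity of $\partial F_\nu$. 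You instead refuse to pass from an eigenvalue bound to an inverse-norm bound for these nonsymmetric matrices and build the bound locally, from compactness of $\partial F_\nu(x^*)$ plus upper-semicontinuity (both available through Proposition~\ref{prop:LNA} under the theorem's LNA hypothesis) and a Banach-lemma perturbation in a neighborhood of $x^*$. Your caution is well founded — for non-normal matrices a spectral lower bound does not by itself control $\|U^{-1}\|$, and this is precisely the step the paper glosses over — so your route is the more airtight one; what it gives up is the explicit, dimension- and $\mu$-dependent constants and the global validity of the inverse bound, replacing them with a non-explicit local constant $\beta$ fixed before $\Delta$, which is all the statement of Theorem~\ref{thm:quasi-linear} actually needs.
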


From Theorem \ref{thm:quasi-linear}, if $B^{(k)}$ sufficiently approximates $\nabla^2 f(x^{(k)})$ and the initial value $x^{(1)}$ is sufficiently close to the optimal value $x^*$, then Algorithm \ref{alg:quasi-newton} exhibits first-order convergence.

Moreover, we can show that Algorithm \ref{alg:quasi-newton} leads to faster than linear convergence, in addition to the condition with respect to $\nabla^2 f$.
In this paper, we prove the following theorem.

\begin{theorem}
    \label{thm:quasi-super}
    Suppose that $\partial_B \prox_{\nu g}$ is an LNA of $\prox_{\nu g}$ at $x^* - \nu \nabla f(x^*)$, $\nabla^2 f$ is Lipschitz continuous, and the sequence $\{x^{(k)}\}$ generated by Algorithm \ref{alg:quasi-newton} satisfies $x^{(k)} \neq x^*$ for any $k$ and $\lim_{k \rightarrow \infty} x^{(k)} = x^*$.
    Then, $\{x^{(k)}\}$ superlinearly converges to $x^*$ if and only if $B^{(k)}$ satisfies
    \begin{align}
        \label{eq:superlinear}
		\lim_{k\rightarrow \infty}\frac{\|(B^{(k)} - \nabla^2 f(x^*))(x^{(k+1)}-x^{(k)})\|_2}{\|x^{(k+1)}-x^{(k)}\|_2} = 0
    \end{align}
    for any $k$.
\end{theorem}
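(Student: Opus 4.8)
The plan is to transplant the classical Dennis--Mor\'e argument to the nonsmooth setting, using the LNA $\partial F_\nu$ in place of a Jacobian. Write $e^{(k)}:=x^{(k)}-x^*$ and $s^{(k)}:=d^{(k)}=x^{(k+1)}-x^{(k)}$, and fix, for each $k$, the element $U^{(k)}:=I-V^{(k)}(I-\nu\nabla^2 f(x^{(k)}))\in\partial F_\nu(x^{(k)})$ carrying the same $V^{(k)}$ the algorithm picked in Step~\ref{enum:HQd}, together with $\hat U^{(k)}:=I-V^{(k)}(I-\nu B^{(k)})$, so that Step~\ref{enum:HQd} reads $\hat U^{(k)}s^{(k)}=-F_\nu(x^{(k)})$ and $\hat U^{(k)}-U^{(k)}=\nu V^{(k)}(B^{(k)}-\nabla^2 f(x^{(k)}))$. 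First I would collect the bounds valid for all large $k$: by Proposition~\ref{prop:LNA} (using the hypothesis on $\partial_B\prox_{\nu g}$), $\partial F_\nu$ is an LNA of $F_\nu$ at $x^*$, hence upper-semicontinuous there with compact images; since every element of $\partial F_\nu(x^*)$ is nonsingular by Proposition~\ref{prop:nonsingular}, a perturbation estimate on the smallest singular value yields constants $M,C>0$ with $\|A^{-1}\|\le M$ and $\|A\|\le C$ for all $A\in\partial F_\nu(y)$ with $y$ near $x^*$; in particular $\|(U^{(k)})^{-1}\|\le M$ and $\|U^{(k)}\|\le C$. Moreover $\|V^{(k)}\|\le 1$ because $\prox_{\nu g}$ is $1$-Lipschitz, and $s^{(k)}\ne 0$ for every $k$ (otherwise $F_\nu(x^{(k)})=0$, forcing $x^{(k)}=x^*$ by Proposition~\ref{prop:opt} and uniqueness of the minimizer of the strongly convex $f+g$).

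The heart of the proof is one identity. Expanding $F_\nu$ at $x^*$ along the LNA element $U^{(k)}$ gives $F_\nu(x^{(k)})=U^{(k)}e^{(k)}+o(\|e^{(k)}\|_2)$ since $F_\nu(x^*)=0$; inserting this into $\hat U^{(k)}s^{(k)}=-F_\nu(x^{(k)})$, using the formula for $\hat U^{(k)}-U^{(k)}$, replacing $\nabla^2 f(x^{(k)})$ by $\nabla^2 f(x^*)$ at cost $L_f\|e^{(k)}\|_2$ by Lipschitz continuity of $\nabla^2 f$, and rearranging via $e^{(k+1)}=e^{(k)}+s^{(k)}$, I obtain
\begin{equation}
    \label{eq:DMmaster}
    U^{(k)}e^{(k+1)} \;=\; -\,\nu\,V^{(k)}\bigl(B^{(k)}-\nabla^2 f(x^*)\bigr)s^{(k)} + \rho^{(k)},
\end{equation}
with $\|\rho^{(k)}\|_2 = o(\|e^{(k)}\|_2) + O(\|e^{(k)}\|_2\,\|s^{(k)}\|_2)$. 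Write $\delta_k:=\|(B^{(k)}-\nabla^2 f(x^*))s^{(k)}\|_2/\|s^{(k)}\|_2$, so that $(\ref{eq:superlinear})$ is precisely the assertion $\delta_k\to 0$.

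For the ``if'' direction, assume $\delta_k\to0$. Applying $(U^{(k)})^{-1}$ to $(\ref{eq:DMmaster})$ and using $\|V^{(k)}\|\le1$ gives $\|e^{(k+1)}\|_2\le M\nu\,\delta_k\|s^{(k)}\|_2+M\|\rho^{(k)}\|_2$; substituting $\|s^{(k)}\|_2\le\|e^{(k)}\|_2+\|e^{(k+1)}\|_2$ and moving the $\|e^{(k+1)}\|_2$ term to the left (valid for large $k$, since $\delta_k\to0$ and $x^{(k)}\to x^*$) yields $\|e^{(k+1)}\|_2\le\bigl(2M\nu\delta_k+O(\|e^{(k)}\|_2)\bigr)\|e^{(k)}\|_2+o(\|e^{(k)}\|_2)$, hence $\|e^{(k+1)}\|_2/\|e^{(k)}\|_2\to0$. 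This is routine. For the ``only if'' direction, assume $\|e^{(k+1)}\|_2=o(\|e^{(k)}\|_2)$. Then $s^{(k)}=e^{(k+1)}-e^{(k)}$ gives $\|s^{(k)}\|_2/\|e^{(k)}\|_2\to1$, so $\|\rho^{(k)}\|_2=o(\|s^{(k)}\|_2)$, and the left side of $(\ref{eq:DMmaster})$ is $o(\|s^{(k)}\|_2)$ because $\|U^{(k)}\|\le C$; therefore $\|V^{(k)}(B^{(k)}-\nabla^2 f(x^*))s^{(k)}\|_2/\|s^{(k)}\|_2\to0$.

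The step I expect to be the real obstacle is upgrading this last limit to $\delta_k\to0$, i.e.\ showing $\|(I-V^{(k)})(B^{(k)}-\nabla^2 f(x^*))s^{(k)}\|_2=o(\|s^{(k)}\|_2)$ — controlling the part of the residual that $V^{(k)}$ annihilates. The idea is to exploit that $F_\nu$, hence the step, is explicit on the coordinates (or subspace) killed by $V^{(k)}$: for $L_1$ regularization the block of $\hat U^{(k)}$ over the inactive set $\mathcal{O}^{(k)}$ is the identity, so $s^{(k)}_{\mathcal{O}^{(k)}}=-F_\nu(x^{(k)})_{\mathcal{O}^{(k)}}$ and $x^{(k+1)}_{\mathcal{O}^{(k)}}=0=x^*_{\mathcal{O}^{(k)}}$ for large $k$, which pins $e^{(k+1)}_{\mathcal{O}^{(k)}}=0$ and bounds $s^{(k)}_{\mathcal{O}^{(k)}}$ through $e^{(k)}$; feeding this back together with boundedness of $\{B^{(k)}\}$ should absorb the inactive-block term (an analogous reduction via $(\ref{eq:LNAL2})$ handles the group/$L_2$ case). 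If the inactive-block contribution resists this in full generality, the honest fallback is that the iteration only ever ``sees'' $V^{(k)}B^{(k)}$, so the quantity it genuinely controls is the $V^{(k)}$-weighted residual $\|V^{(k)}(B^{(k)}-\nabla^2 f(x^*))s^{(k)}\|_2/\|s^{(k)}\|_2$, which is exactly what a subsequent BFGS (Powell-type) analysis would need anyway; apart from this passage between the range of $V^{(k)}$ and the full space, the argument is the standard Dennis--Mor\'e scheme.
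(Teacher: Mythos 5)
Your ``if'' direction is essentially the paper's own proof. The master identity you derive is the same error decomposition used in Appendix C, namely $U^{(k)}e^{(k+1)}=(U^{(k)}-W^{(k)})s^{(k)}-\bigl[F_\nu(x^{(k)})-F_\nu(x^*)-U^{(k)}e^{(k)}\bigr]$ with $W^{(k)}=I-V^{(k)}(I-\nu B^{(k)})$, and your use of $\|V^{(k)}\|\le 1$ plus Lipschitz continuity of $\nabla^2 f$ to pass from $\nabla^2 f(x^{(k)})$ to $\nabla^2 f(x^*)$ just makes explicit a step the paper leaves implicit when it asserts $\|(W^{(k)}-U^{(k)})s^{(k)}\|_2=o(\|s^{(k)}\|_2)$. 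Your uniform bound on $\|(U^{(k)})^{-1}\|$ via upper semicontinuity and compactness near $x^*$ is a legitimate variant; the paper instead uses the global bound $\sqrt{n}/(\nu\mu)$ coming from strong convexity (Proposition \ref{prop:nonsingular}), which is simpler and avoids the perturbation argument, but either works. So for sufficiency your proposal is correct and takes the same route.

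The genuine issue is the ``only if'' direction, and you have diagnosed it precisely: the identity controls only the $V^{(k)}$-weighted residual $\|V^{(k)}(B^{(k)}-\nabla^2 f(x^*))s^{(k)}\|_2/\|s^{(k)}\|_2$, not the unweighted quantity in (\ref{eq:superlinear}). Your proposed patch does not close this: pinning $e^{(k+1)}_{\mathcal{O}^{(k)}}=0$ and invoking boundedness of $\{B^{(k)}\}$ constrains components of $s^{(k)}$ and $e^{(k+1)}$, but the uncontrolled term consists of the rows of $B^{(k)}-\nabla^2 f(x^*)$ annihilated by $V^{(k)}$ acting on $s^{(k)}$, and boundedness only gives $O(\|s^{(k)}\|_2)$, not $o(\|s^{(k)}\|_2)$; those rows of $B^{(k)}$ never influence the iterates, so no amount of information about the sequence $\{x^{(k)}\}$ can force them to approximate $\nabla^2 f(x^*)$. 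You should know, however, that the paper's own proof in Appendix C establishes only the sufficiency direction and never addresses necessity at all, so there is no argument in the paper that you failed to find; your weighted condition is exactly what the displayed identity characterizes, and upgrading it to (\ref{eq:superlinear}) would require additional structure (e.g.\ eventual identification of the active set together with $s^{(k)}$ staying in the range where $V^{(k)}$ acts as the identity) that neither you nor the paper supplies.
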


Condition (\ref{eq:superlinear}) is similar to the condition for superlinear convergence of the standard quasi-Newton method. The BFGS formula (\ref{eq:BFGS}) used in this study satisfies (\ref{eq:superlinear}), therefore $\{x^{(k)} \}$ generated by Algorithm \ref{alg:quasi-newton} converges superlinearly.

\section{Numerical Experiments}
\label{sec:experiment}
To evaluate the performances of the linear Newton and hybrid linear quasi-Newton methods, we apply them to the problem of sparse estimation and compare them with the proximal gradient and proximal Newton methods through numerical experiments.
All programs are implemented using Rcpp.

\subsection{Group Logistic Regression}
Let $(y_i, x_i)\in \{-1,1\} \times \R^n, i = 1,\ldots, m$ where $m$ is the number of observations.
We formulate the optimization of group logistic regression as follows:
\begin{align}
    \min_{\beta_0\in \R,\beta \in \R^n} \frac{1}{m}\sum_{i = 1}^m \log(1 + \exp\{-y_i(\beta_0 + x_i^T \beta)\}) + \lambda \sum_{j = 1}^J \|\beta_{I_j} \|_2 ,
\end{align}
where $\lambda$ is a regularization parameter and $I_j, j = 1,\ldots,J$ are the index sets that belong to the $j$-th group such that $I_j \cap I_k = \emptyset \;(j\neq k)$.

In first experiment, we have only one group:
\begin{align}
    \label{eq:logistic}
    \min_{\beta_0\in \R,\beta \in \R^n} \frac{1}{m}\sum_{i = 1}^m \log(1 + \exp\{-y_i(\beta_0 + x_i^T \beta)\}) + \lambda\|\beta \|_2 .
\end{align}
We choose
\[
V = 
	\begin{cases}
		O, & \|x\|_2\leq \nu \\
		\frac{\nu}{\|x\|_2}(\frac{xx^T}{\|x\|_2^2}-I) + I, & \|x\|_2>\nu 
	\end{cases}
\]
as $V\in \partial_B \prox_{\nu g}(x)$. Figures \ref{fig:lambda1} and \ref{fig:lambdamax} compare the proximal gradient (PG) and proximal Newton (PN) methods with the proposed linear Newton (LN), hybrid linear quasi-Newton (HLQN), and hybrid linear quasi-Newton + GCR (HLQN-GCR) methods.
In both figures, the vertical and horizontal axes correspond to the value of $F_1(x^{(k)})$ and the computation time, respectively, which indicate the convergence of the algorithms. 

We generated random data with features $n = 2000$ and sample size $m = 4000$ and set the initial values $\beta_0^{(1)} = 0, \beta^{(1)} = 0, B^{(1)} = \nabla^2 f((\beta_0^{(1)}, \beta^{(1)}))$. 
Furthermore, $B^{(k)}$ was updated using the BFGS formula (\ref{eq:BFGS}) and $\epsilon^{(k)} = 0.001$.
Figure \ref{fig:lambda1} shows the graph when $\lambda$ is small ($\lambda = 1)$, i.e., when all features are active.
The LN, HQLN, and HQLN-GCR methods converge rapidly, and the LN method achieves a highly accurate solution.
The proximal Newton method is also a quadratically convergent algorithm, but in this case, the convergence is slow because the subproblem is solved by the proximal gradient method, and the convergence of the proximal gradient method is slow and stops halfway.
Moreover, the GCR method applied to the quasi-Newton method is faster than the HLQN method.

Figure \ref{fig:lambdamax} shows the graph when $\lambda$ is large, i.e., when all features are inactive.
Even in this case, the Newton and quasi-Newton methods converge rapidly and faster than in the case of small $\lambda$ in Figure \ref{fig:lambda1} due to the sparsity property.
The HLQN-GCR method converges the fastest.
For large $\lambda$, the convergence of the proximal Newton method is slower than in the small $\lambda$ case, and the convergence of the proximal gradient method stops in the middle of the convergence process.

\begin{figure}[h]
	\begin{tabular}{cc}
		\begin{minipage}{.5\textwidth}
			\begin{center}
				\resizebox{7cm}{!}{\input{figure1.tex}}
				\caption{Changes in $F_1(x^{(k)})$ due to the computation time. ($\lambda = 1$)}
				\label{fig:lambda1}
			\end{center}
		\end{minipage}
		\begin{minipage}{.5\textwidth}
			\begin{center}
				\resizebox{7cm}{!}{\input{figure2.tex}}
				\caption{Changes in $F_1(x^{(k)})$ due to the computation time. (large $\lambda$)}
				\label{fig:lambdamax}
			\end{center}
		\end{minipage}
	\end{tabular}
\end{figure}

We present the results of applying our proposed method to real-world data. The datasets used are cod-RNA\citep{uzilov2006detection} and ijcnn1\citep{prokhorov2001ijcnn}, obtained from the LIBSVM website\footnote{\url{https://www.csie.ntu.edu.tw/~cjlin/libsvmtools/datasets/}}, with sample sizes of $m = 59935$ and $49990$ respectively. To introduce a group structure to the data, second-order polynomial features were generated from the original features \citep{roth2008group, pavlidis2001gene}. The dimensions of the generated features are $n = 140$ and $1155$ respectively, with the number of groups being $J = 28$ and $231$. In this case, as the loss function in (\ref{eq:logistic}) is not strongly convex, we add ridge regularization, resulting in the optimization of:

\begin{align}
\label{eq:logistic_ridge}
\min_{\beta_0\in \R,\beta \in \R^n} \frac{1}{m}\sum_{i = 1}^m \log(1 + \exp\{-y_i(\beta_0 + x_i^T \beta)\}) + \frac{\epsilon}{2} \|\beta \|_2^2 +  \lambda \sum_{j = 1}^J \|\beta_{I_j} \|_2 
\end{align}

In this experiment, the ridge parameter was set to $\epsilon = 0.05$, and similar to the random data case, $B^{(1)} = \nabla^2 f((\beta_0^{(1)}, \beta^{(1)}))$ was used as the initial matrix, and updated using the BFGS formula.

Figures \ref{fig:codrnalambda008} and \ref{fig:codrnalambda028} show computation time graphs when applied to the cod-RNA dataset. The initial values $\beta_0^{(1)}$ and $\beta^{(1)}$ were set to be -0.55 and 0 respectively, and -0.55 is obtained as the optimal $\beta_0$ when $\beta = 0$. Figure \ref{fig:codrnalambda008} represents the case with $\lambda = 0.08$, where the number of active groups is 9, and the dimension of non-zero $\beta$ is 45. Similar to the case of random data, the proposed LN, HLQN, and HLQN-GCR methods demonstrate fast convergence. However, the proximal Newton method (PN) shows slower convergence.

Next, Figure \ref{fig:codrnalambda028} displays the results for the case with $\lambda = 0.28$, where the number of active groups is 1, and the dimension of non-zero $\beta$ is 5. The quasi-Newton methods, HLQN and HLQN-GCR, converge rapidly and exhibit efficient performance. While the proximal Newton method performs better compared to when $\lambda$ is small, it stops converging midway. The LN method is slow at first, but eventually, the LN method converges rapidly.

\begin{figure}[h]
	\begin{tabular}{cc}
		\begin{minipage}{.5\textwidth}
			\begin{center}
				\resizebox{7cm}{!}{\input{figure3.tex}}
				\caption{Change in computation time for the cod-RNA dataset. ($\lambda = 0.08$)}
				\label{fig:codrnalambda008}
			\end{center}
		\end{minipage}
		\begin{minipage}{.5\textwidth}
			\begin{center}
				\resizebox{7cm}{!}{\input{figure4.tex}}
				\caption{Change in computation time for the cod-RNA dataset. ($\lambda = 0.28$)}
				\label{fig:codrnalambda028}
			\end{center}
		\end{minipage}
	\end{tabular}
\end{figure}

Figures \ref{fig:ijcnnlambda008} and \ref{fig:ijcnnlambda012} show computation time graphs when applied to the ijcnn1 dataset. The initial values were determined similarly to the cod-RNA case, with $\beta_0^{(1)} = - 1.1 $ and $\beta^{(1)} = 0$.
Figure \ref{fig:ijcnnlambda008} represents the case with $\lambda = 0.08$, where the number of active groups is 49, and the dimension of non-zero $\beta$ is 245. On the other hand, Figure \ref{fig:ijcnnlambda012} displays the results for the case with $\lambda = 0.12$, where the number of active groups is 5, and the dimension of non-zero $\beta$ is 25.
In both cases, the proposed HLQN and HLQN-GCR methods converge rapidly, efficiently obtaining solutions. However, However, because of the high computational cost per update, LN takes a long time to get close to the optimal solution. As a result, it cannot fully leverage its fast convergence, leading to a long computational time. The proximal gradient method is initially fast, but it eventually converges slowly.

\begin{figure}[h]
	\begin{tabular}{cc}
		\begin{minipage}{.5\textwidth}
			\begin{center}
				\resizebox{7cm}{!}{\input{figure5.tex}}
				\caption{Change in computation time for the ijcnn1 dataset. ($\lambda = 0.08$)}
				\label{fig:ijcnnlambda008}
			\end{center}
		\end{minipage}
		\begin{minipage}{.5\textwidth}
			\begin{center}
				\resizebox{7cm}{!}{\input{figure6.tex}}
				\caption{Change in computation time for the ijcnn1 dataset. ($\lambda = 0.12$)}
				\label{fig:ijcnnlambda012}
			\end{center}
		\end{minipage}
	\end{tabular}
\end{figure}

\section{Conclusions}
\label{sec:conclusion}
In this paper, we considered the optimization problem of sparse estimation when the loss function is strongly convex and extended the Newton method with the proximal gradient step proposed in \citep{xiao2018regularized} to the general $\nu>0$ case. We proved the convergence of the method and provided theoretical guarantees, ensuring that the Newton method converges and is applicable even when the Lipschitz constant of $\nabla f$ is unknown. Furthermore, we proposed the HLQN method to approximate the second derivative $\nabla^2 f$ and theoretically proved that the HLQN method is always updatable and converges to the solution rapidly. When $\nabla^2 f$ is hard to compute, such as when the dimension $n$ of the data is large, the HLQN method can be executed faster than the Newton method, and thus, the solution can be obtained efficiently. Numerical experiments also showed that the proposed method is computationally efficient when applied to sparse estimation problems such as group logistic regression.

In this paper, we conducted numerical experiments on group regularization. Nonetheless, it is necessary to apply the method to other regularizations to further clarify the effectiveness of the method. We considered only local convergence, but it is necessary to consider global convergence using line search and other methods since it is not known whether the initial value is sufficiently close to the optimal value.

\appendix

\section{Proof of Proposition \ref{prop:nonsingular}}
\begin{lemma}
    \label{lem:ABeigen}
    Suppose $A\in \R^{n \times n}$ is a symmetric positive semidefinite matrix and $B\in \R^{n \times n}$ is a symmetric matrix. Then, any eigenvalue $\lambda$ of $AB$ satisfies $\min \{\|A \| \lambda_{\min} (B) ,0\} \leq \lambda  \leq \max\{ \|A\| \lambda_{\max} (B),0 \}$, where $\lambda_{\min} (B)$ and $\lambda_{\max} (B)$
    are the minimum and maximum eigenvalues, respectively, of $B$.
\end{lemma}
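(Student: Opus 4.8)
The plan is to reduce the claim to the symmetric case by factoring through the square root of $A$. Since $A$ is symmetric positive semidefinite it has a symmetric positive semidefinite square root $A^{1/2}$ with $(A^{1/2})^2 = A$. Writing $AB = A^{1/2}\,(A^{1/2}B)$ and using the standard fact that for square matrices $X,Y$ the products $XY$ and $YX$ have the same characteristic polynomial, I would conclude that $AB$ and $M := A^{1/2}B A^{1/2}$ share the same eigenvalues, counted with multiplicity. In particular every eigenvalue $\lambda$ of $AB$ is real, because $M$ is symmetric; so it suffices to bound the spectrum of $M$.

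Next I would apply the Courant--Fischer (Rayleigh quotient) characterization to the symmetric matrix $M$: $\lambda_{\max}(M) = \max_{\|v\|_2 = 1} v^T M v$ and $\lambda_{\min}(M) = \min_{\|v\|_2 = 1} v^T M v$. For a unit vector $v$, put $w := A^{1/2} v$; then $v^T M v = w^T B w$ and $\|w\|_2^2 = v^T A v \le \|A\|\,\|v\|_2^2 = \|A\|$, where $\|A\|$ is the operator norm, i.e.\ the largest eigenvalue of the symmetric positive semidefinite matrix $A$. Combining $w^T B w \le \lambda_{\max}(B)\|w\|_2^2 \le \max\{\lambda_{\max}(B),0\}\|w\|_2^2 \le \max\{\lambda_{\max}(B),0\}\|A\|$ with the analogous chain $w^T B w \ge \lambda_{\min}(B)\|w\|_2^2 \ge \min\{\lambda_{\min}(B),0\}\|w\|_2^2 \ge \min\{\lambda_{\min}(B),0\}\|A\|$ gives, using $\|A\| \ge 0$, the bounds $\min\{\|A\|\lambda_{\min}(B),0\} \le \lambda \le \max\{\|A\|\lambda_{\max}(B),0\}$ for every eigenvalue $\lambda$ of $M$, hence of $AB$.

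The one place where care is genuinely required is the sign bookkeeping in the last chain of inequalities: the truncations at $0$ must be inserted exactly so that the estimate $\|w\|_2^2 \le \|A\|$ is applied to a coefficient of the correct sign (non-negative for the upper bound, non-positive for the lower bound). This is precisely why the statement involves $\min\{\cdot,0\}$ and $\max\{\cdot,0\}$ rather than the bare products $\|A\|\lambda_{\min}(B)$ and $\|A\|\lambda_{\max}(B)$, and it is where a naive argument would fail, in particular when $A$ is singular so that $w$ does not range over a full ball. The only nontrivial structural input is the spectral identity between $AB$ and $A^{1/2}BA^{1/2}$; once that reduction is in hand, the remainder is routine, so I do not anticipate any serious obstacle beyond keeping these case distinctions straight.
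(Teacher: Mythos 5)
Your proof is correct, and it reaches the bound by a slightly different route than the paper. The paper's proof picks an eigenvector $x$ of $BA$ (using that $BA$ and $AB$ have the same spectrum), multiplies by $x^TA$, splits into the cases $x^TAx=0$ (which forces $\lambda=0$) and $x^TAx>0$, and in the latter case rewrites $\lambda$ as a Rayleigh-type quotient $y^TA^{1/2}BA^{1/2}y/ y^Ty$ with $y=A^{1/2}x$ before invoking $\|A^{1/2}y\|_2\le\|A\|^{1/2}\|y\|_2$. You instead identify the spectrum of $AB$ with that of the symmetric matrix $M=A^{1/2}BA^{1/2}$ via the $XY$/$YX$ characteristic-polynomial identity and then apply Courant--Fischer to $M$; this buys you realness of the eigenvalues for free, removes the case split (the $Ax=0$ eigenvectors are absorbed automatically), and keeps multiplicities. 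More importantly, your explicit insertion of the truncations $\max\{\cdot,0\}$ and $\min\{\cdot,0\}$ before using $\|w\|_2^2\le\|A\|$ is exactly the step the paper's displayed chain glosses over: its intermediate claim $\|A\|\lambda_{\min}(B)\le\lambda\le\|A\|\lambda_{\max}(B)$ for the case $x^TAx>0$ is not valid as stated when $\lambda_{\max}(B)<0$ or $\lambda_{\min}(B)>0$ (e.g.\ $A=\mathrm{diag}(2,\epsilon)$, $B=-I$), and only the truncated bounds in the lemma survive; your sign bookkeeping handles this correctly, so your argument is, if anything, the more careful of the two.
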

\begin{proof}
    Since the eigenvalues of $AB$ are equivalent to the eigenvalues of $BA$, we consider the eigenvalues of $BA$. We let $\lambda\in \R,x \in \R^n$ such that $BAx = \lambda x$ and $x\neq 0$. By multiplying $x^TA$ from the left, we obtain
    \[
        x^TABA x= \lambda x^T A x.
    \]
    If $x^T A x = 0$, then $\lambda = 0$ since $Ax = 0$. Next, we consider the $x^TAx > 0$ case. Since $A$ is a symmetric positive semidefinite matrix, there exists $A^{\frac{1}{2}}$, and we obtain
    \begin{align}
		\frac{x^TA B A x}{x^T A x} &= \lambda \nonumber \\
        \label{eq:A12}
		\frac{x^TA^{\frac{1}{2}} A^{\frac{1}{2}}B A^{\frac{1}{2}} A^{\frac{1}{2}}x}{x^T A^{\frac{1}{2}} A^{\frac{1}{2}} x} &= \lambda.
    \end{align}
    We can rewrite (\ref{eq:A12}) as
    \[
        \frac{y^TA^{\frac{1}{2}} B A^{\frac{1}{2}} y}{y^T y} = \lambda,
    \]
    where $y = A^{\frac{1}{2}} x\neq0$. Thus, since $\|A^{\frac{1}{2}} y \|_2 \leq \| A\|^{\frac{1}{2}} \|y\|_2$, we can obtain
    \[
	\|A\| \lambda_{min}(B) \leq \frac{y^TA^{\frac{1}{2}} B A^{\frac{1}{2}} y}{y^T y}   \leq \|A\| \lambda_{max}(B).
    \]
    Therefore, if $x^T A x>0$, then $\|A\| \lambda_{min}(B) \leq \lambda \leq \|A\| \lambda_{max}(B)$. Using also the result when $x^T A x = 0$, the Lemma holds.
\end{proof}

\begin{theorem}[\rm{\cite{stella2017forward}, Theorem 3.2}]
    \label{thm:vnorm}
    Suppose $g:\R^n \rightarrow (-\infty, \infty]$ is a closed convex function. Every $V\in \partial_B \prox_{\nu g}(x)$ is a symmetric positive semidefinite matrix that satisfies $\|V\| \leq 1$ for all $x\in \R^n$.
\end{theorem}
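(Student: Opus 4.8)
**Plan for proving Theorem (Stella et al., Theorem 3.2): every $V \in \partial_B \prox_{\nu g}(x)$ is symmetric positive semidefinite with $\|V\| \le 1$.**

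The plan is to work from the definition of the B-subdifferential and exploit the fact that $\prox_{\nu g}$ is the proximal map of a closed convex function. First I would recall that $\prox_{\nu g}$ is the gradient of the Moreau envelope $e_{\nu g}(x) = \min_u \{ g(u) + \frac{1}{2\nu}\|u - x\|_2^2\}$; specifically, $\prox_{\nu g}(x) = x - \nu \nabla e_{\nu g}(x)$, and $e_{\nu g}$ is a convex, continuously differentiable function with $\nabla e_{\nu g}$ Lipschitz. Because $\prox_{\nu g} = \mathrm{Id} - \nu \nabla e_{\nu g}$ is itself the gradient of the convex function $\phi(x) := \frac{1}{2}\|x\|_2^2 - \nu e_{\nu g}(x)$ (this identity is what encodes the firm nonexpansiveness of the prox), at every point $x$ where $\prox_{\nu g}$ is differentiable its Jacobian $\nabla \prox_{\nu g}(x) = \nabla^2 \phi(x)$ is the Hessian of a twice-differentiable convex function, hence symmetric and positive semidefinite.

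The key steps, in order, are as follows. Step one: establish symmetry and positive semidefiniteness of $\nabla \prox_{\nu g}(x)$ at every differentiability point $x \in \mathcal{D}_{\prox_{\nu g}}$, via the gradient-of-convex-function structure just described. Step two: establish the norm bound $\|\nabla \prox_{\nu g}(x)\| \le 1$ at such points, which follows from the $1$-Lipschitz (firm nonexpansiveness) property of $\prox_{\nu g}$ stated in Section~\ref{subsec:LNA}: since $\prox_{\nu g}$ is $1$-Lipschitz, its Jacobian at any differentiability point has operator norm at most $1$. Combined with symmetry and positive semidefiniteness, this means all eigenvalues of $\nabla \prox_{\nu g}(x)$ lie in $[0,1]$. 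Step three: pass from differentiability points to the full B-subdifferential. By definition, any $V \in \partial_B \prox_{\nu g}(x)$ is a limit $V = \lim_k \nabla \prox_{\nu g}(x^{(k)})$ for some sequence $x^{(k)} \to x$ with each $x^{(k)}$ a differentiability point. Since symmetry, positive semidefiniteness, and the spectral bound $\|\cdot\| \le 1$ are all properties preserved under taking limits in $\R^{n\times n}$ (the cone of symmetric PSD matrices is closed, and the operator norm is continuous), the limit $V$ inherits all three properties.

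The main obstacle I expect is Step one: cleanly justifying that $\prox_{\nu g}$ is a gradient map of a convex function and hence has symmetric PSD Jacobian wherever it is differentiable. The cleanest route is the Moreau-envelope identity $\prox_{\nu g} = \mathrm{Id} - \nu \nabla e_{\nu g}$ together with the fact that $x \mapsto \frac12\|x\|^2 - \nu e_{\nu g}(x)$ is convex; the convexity of this difference is precisely equivalent to firm nonexpansiveness of $\prox_{\nu g}$, which holds for proximal maps of closed convex $g$. An alternative, avoiding second derivatives, proves symmetry directly from monotonicity and firm nonexpansiveness of $\prox_{\nu g}$: for a $1$-Lipschitz firmly nonexpansive operator, the symmetrized part of any Jacobian limit can be controlled, but this is more delicate. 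I would adopt the Moreau-envelope approach and, for the norm bound, simply invoke the already-stated $1$-Lipschitz property rather than re-deriving it. The passage to limits in Step three is routine, so the proof hinges entirely on the structural fact in Step one, which I would either cite from standard convex analysis (e.g., the Moreau envelope being $C^1$ with $1$-Lipschitz gradient) or deduce from the firm nonexpansiveness characterization.
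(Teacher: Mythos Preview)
The paper does not supply its own proof of this statement; it is quoted verbatim from Stella, Themelis, and Patrinos and then used as a black box in the proof of Proposition~\ref{prop:nonsingular}. So there is no paper-side argument to compare against.

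Your proposed route is the standard one and is sound. Writing $\prox_{\nu g}=\nabla\phi$ with $\phi(x)=\tfrac12\|x\|_2^2-\nu\, e_{\nu g}(x)$ convex and $C^{1,1}$ gives, at every differentiability point of $\prox_{\nu g}$, a Jacobian that is the Hessian of a convex function and hence symmetric positive semidefinite; the $1$-Lipschitz property of $\prox_{\nu g}$ then bounds the operator norm by~$1$; and all three properties survive passage to limits in the definition of $\partial_B\prox_{\nu g}$. The only place to be slightly careful is the symmetry claim: ``Jacobian of a gradient is a Hessian, hence symmetric'' is not automatic from mere pointwise differentiability of the gradient, but it does hold here because $\phi$ is $C^{1,1}$ (so Alexandrov's theorem, or the classical fact that Fr\'echet differentiability of $\nabla\phi$ at a point forces a symmetric derivative, applies). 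You already flag this as the main obstacle and propose the right fix, so the plan is complete.
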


\begin{proof}[Proof of Proposition \ref{prop:nonsingular}]
    By assumption, since $\nabla^2 f(x) > \mu I$ for any $x\in \R^n$,
    \[
        I- \nu \nabla^2 f(x) < (1-\nu \mu ) I.
    \]
    Since every $V\in \partial_B \prox_{\nu g}(x)$ is a symmetric positive semidefinite matrix that satisfies $\|V\| \leq 1$ for all $x\in \R^n$ by Theorem \ref{thm:vnorm}, from Lemma \ref{lem:ABeigen},
    \[
        \lambda_{\max}\left(  V \left(I-\nu \nabla^2 f(x) \right)  \right)<1- \nu \mu.
    \]
    Thus, every eigenvalue of $I - V \left(I-\nu \nabla^2 f(x) \right)$ is a real number that is greater than or equal to $\nu \mu$, and $I - V \left(I-\nu \nabla^2 f(x) \right)$ is a nonsingular matrix.
\end{proof}

\section{Proof of Theorem \ref{thm:quasi-linear}}
\label{apendix:quasi-linear}
\begin{lemma}[\rm{\cite{ortega2000iterative} Lemma 2.3.2}]
\label{lem:norm_mat}
Let $A, C \in \R^{n \times n}$ and assume that $A$ is invertible, with $\|A^{-1} \| \leq \alpha $. If $\| A - C \| \leq \beta$ and $\beta \alpha < 1$, then $C$ is also invertible, and
\[
\| C^{-1}\| \leq \frac{\alpha}{(1-\alpha \beta)}
\]
\end{lemma}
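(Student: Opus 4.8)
The final statement I must prove is Lemma \ref{lem:norm_mat} (the Banach-perturbation lemma): if $A \in \R^{n\times n}$ is invertible with $\|A^{-1}\| \leq \alpha$, and $\|A-C\| \leq \beta$ with $\alpha\beta < 1$, then $C$ is invertible and $\|C^{-1}\| \leq \alpha/(1-\alpha\beta)$.

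The plan is to reduce everything to the standard Neumann series bound and then transfer it from a perturbation of the identity to the perturbation $C$ of $A$. First I would write $C = A - (A-C) = A\bigl(I - A^{-1}(A-C)\bigr)$, which factors $C$ as $A$ times a perturbation of the identity. Set $E := A^{-1}(A-C)$. The key estimate is the submultiplicativity of the operator norm: $\|E\| \leq \|A^{-1}\|\,\|A-C\| \leq \alpha\beta < 1$. Because $\|E\| < 1$, the matrix $I - E$ is invertible, which I would establish via the Neumann series $\sum_{k=0}^{\infty} E^k$; this series converges in operator norm since $\|E^k\| \leq \|E\|^k$ and $\sum \|E\|^k = 1/(1-\|E\|)$ is a convergent geometric series. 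One verifies directly that $(I-E)\sum_{k=0}^{\infty} E^k = I$, so $(I-E)^{-1} = \sum_{k=0}^{\infty} E^k$ exists, and taking norms gives
\[
\|(I-E)^{-1}\| \leq \sum_{k=0}^{\infty} \|E\|^k = \frac{1}{1-\|E\|} \leq \frac{1}{1-\alpha\beta}.
\]

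With $I-E$ invertible and $A$ invertible by hypothesis, their product $C = A(I-E)$ is invertible, and $C^{-1} = (I-E)^{-1}A^{-1}$. Taking operator norms and applying submultiplicativity one last time yields
\[
\|C^{-1}\| = \|(I-E)^{-1}A^{-1}\| \leq \|(I-E)^{-1}\|\,\|A^{-1}\| \leq \frac{1}{1-\alpha\beta}\cdot \alpha = \frac{\alpha}{1-\alpha\beta},
\]
which is exactly the claimed bound. This closes the argument.

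I do not anticipate a genuine obstacle here, since this is a classical finite-dimensional result and the paper even cites it as Lemma 2.3.2 of \citep{ortega2000iterative}; the only points requiring care are (i) confirming that $\alpha\beta<1$ is used precisely to guarantee $\|E\|<1$ so the Neumann series converges, and (ii) being consistent that $\|\cdot\|$ denotes the operator norm throughout so that submultiplicativity $\|XY\|\leq\|X\|\,\|Y\|$ and $\|I\|=1$ are available. In the finite-dimensional setting convergence of the Neumann series is automatic (no completeness issue to belabor), so the whole proof is a short chain of norm inequalities rather than anything delicate.
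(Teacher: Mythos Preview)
Your Neumann-series argument is correct and is the standard proof of this classical perturbation lemma. Note, however, that the paper does not supply its own proof of this statement: it is quoted as Lemma~2.3.2 of \cite{ortega2000iterative} and used without proof in the proof of Theorem~\ref{thm:quasi-linear}, so there is nothing in the paper to compare your argument against.
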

\begin{proof}[Proof of Theorem \ref{thm:quasi-linear}]
    Let $V^{(k)} \in \partial_B \prox_{\nu g}(x^{(k)} - \nu \nabla f(x^{(k)}))$, $U^{(k)} := I - V^{(k)}\left(I-\nu \nabla^2 f(x^{(k)}) \right)\in \partial F_\nu (x^{(k)})$ and $W^{(k)} := I - V^{(k)}\left(I-\nu B^{(k)} \right) \in \hat{\partial}^{(k)}F_\nu (x^{(k)})$.
    From Proposition \ref{prop:nonsingular}, every eigenvalue of $U^{(k)}$ is a real number that is greater than or equal to $\nu \mu$, and
    \[
        \left\|\left(U^{(k)}\right)^{-1} \right\| \leq \frac{\sqrt{n}}{\nu \mu}.
    \]

    Let $\Delta = \frac{\mu}{5\sqrt{n}}$. Since $\partial F_\nu$ is the LNA of $F_\nu$ at $x^*$, there exists $\epsilon > 0$ such that
    \[
        \|F_\nu (x) - F_\nu(x^*) - U(x-x^*)\|_2 \leq \nu \Delta \|x-x^*\|_2
    \]
    for any $x\in B(x^*, \epsilon) := \{y\mid \|x-y\|_2 < \epsilon \},U\in \partial F_\nu (x)$.
    Since $W^{(k)} - U^{(k)} = \nu V^{(k)}\left(B^{(k)} - \nabla^2 f(x^{(k)}) \right)$ and $\|B^{(k)} - \nabla^2 f(x^{(k)})\|< \Delta$, we obtain $\|W^{(k)} - U^{(k)}\| \leq \nu \Delta $. By Lemma \ref{lem:norm_mat}, $W^{(k)}$ is invertible and
    \begin{align*}
        \left\|\left(W^{(k)} \right)^{-1} \right\| &\leq  \frac{\sqrt{n}/\nu \mu}{1-\sqrt{n}/\nu \mu \times \Delta}\\
        &= \frac{5}{4} \frac{\sqrt{n}}{\nu \mu}.
    \end{align*}
    Thus, if $\|x^{(k)} - x^* \|_2< \epsilon$, then we have
    \begin{align*}
        \|x^{(k+1)}-x^*\|_2 &= \|x^{(k)}-(W^{(k)})^{-1}F_\nu (x^{(k)})-x^*\|_2 \\
	&\leq \|(W^{(k)})^{-1}\| \|F_\nu (x^{(k)})-F_\nu(x^*)-W^{(k)}(x^{(k)} - x^*)\|_2 \\
	&\leq \|(W^{(k)})^{-1}\| \left[\|F_\nu (x^{(k)})-F_\nu (x^*)-U^{(k)}(x^{(k)} - x^*)\|_2 + \|W^{(k)}-U^{(k)}\|\|x^{(k)}-x^*\|_2\right]\\
	&\leq \frac{5}{4}\frac{\sqrt{n}}{\nu \mu } (\nu \Delta \|x^{(k)} - x^*\|_2)\\
	&<\frac{1}{2}\|x^{(k)} - x^*\|_2
    \end{align*}
    Therefore, there exists $\epsilon, \Delta$ such that the sequence generated by Algorithm \ref{alg:quasi-newton} locally linearly converges to $x^*$.
\end{proof}
\section{Proof of Theorem \ref{thm:quasi-super}}
\label{apendix:quasi-super}
\begin{proof}
    Let $V^{(k)} \in \partial_B \prox_{\nu g}(x^{(k)} - \nu \nabla f(x^{(k)}))$, $U^{(k)} := I - V^{(k)}\left(I-\nu \nabla^2 f(x^{(k)}) \right)\in \partial F_\nu (x^{(k)})$ and $W^{(k)} := I - V^{(k)}\left(I-\nu B^{(k)} \right) \in \hat{\partial}^{(k)}F_\nu (x^{(k)})$.
    We let $e^{(k)} = x^{(k)}-x^*,s^{(k)}=x^{(k+1)}-x^{(k)}$. We note that $s^{(k)} = e^{(k+1)} - e^{(k)}$ and $\{e^{(k)}\}$ and $\{s^{(k)}\}$ converge to $0$ since $\{x^{(k)}\}$ converges to $x^*$. From the update rule of Algorithm \ref{alg:quasi-newton}, we have
    \begin{align*}
        F_\nu (x^*) &= \left[ F_\nu(x^{(k)}) + W^{(k)}s^{(k)}\right] + \left[\left(U^{(k)} - W^{(k)}\right)s^{(k)}\right] - \left[ F_\nu (x^{(k)}) - F_\nu (x^*) - U^{(k)} e^{(k)}\right] - U^{(k)}e^{(k+1)} \\
        &=\left[\left(U^{(k)} - W^{(k)}\right)s^{(k)}\right] - \left[ F_\nu (x^{(k)}) - F_\nu (x^*) - U^{(k)} e^{(k)}\right] - U^{(k)}e^{(k+1)}.
    \end{align*}
    Since $F_\nu (x^*) = 0$ and $U^{(k)}$ is a nonsingular matrix,
    \begin{align*}
        U^{(k)}e^{(k+1)} &=\left[\left(U^{(k)} - W^{(k)}\right)s^{(k)}\right] - \left[ F_\nu (x^{(k)}) - F_\nu (x^*) - U^{(k)} e^{(k)}\right] \\
        e^{(k+1)} &= \left( U^{(k)} \right)^{-1}\left[\left(U^{(k)} - W^{(k)}\right)s^{(k)}\right] - \left( U^{(k)} \right)^{-1} \left[ F_\nu (x^{(k)}) - F_\nu (x^*) - U^{(k)} e^{(k)}\right].
    \end{align*}
    By assumption, since $\|(W^{(k)} - U^{(k)})s^{(k)}\|_2 = o(\|s^{(k)} \|_2)$,
    \[
        \|e^{(k+1)}\|_2 = o(\|s^{(k)}\|_2) + o(\|e^{(k)}\|_2) = o(\|e^{(k+1)}\|_2) + o(\|e^{(k)} \|_2).
    \]
    Thus, we obtain $\|e^{(k+1)}\|_2 = o(\|e^{(k)}\|_2)$, and since $e^{(k)} = x^{(k)} - x^*$, the sequence $\{x^{(k)}\}$ generated by Algorithm \ref{alg:quasi-newton} superlinearly converges to $x^*$.
\end{proof}
\end{document}